\DeclareFontFamily{U}{matha}{\hyphenchar\font45}
\DeclareFontShape{U}{matha}{m}{n}{
      <5> <6> <7> <8> <9> <10> gen * matha
      <10.95> matha10 <12> <14.4> <17.28> <20.74> <24.88> matha12
      }{}
\DeclareSymbolFont{matha}{U}{matha}{m}{n}
\DeclareFontFamily{U}{mathx}{\hyphenchar\font45}
\DeclareFontShape{U}{mathx}{m}{n}{
      <5> <6> <7> <8> <9> <10>
      <10.95> <12> <14.4> <17.28> <20.74> <24.88>
      mathx10
      }{}
\DeclareSymbolFont{mathx}{U}{mathx}{m}{n}
\DeclareMathDelimiter{\vvvert}{0}{matha}{"7E}{mathx}{"17}
\newcommand{\tnorm}[1]{{\left\vvvert #1 \right\vvvert}}
\journalname{Journal of Scientific Computing}
\begin{document}

\title{Preconditioning of a hybridized discontinuous Galerkin finite
  element method for the Stokes equations\thanks{SR gratefully
    acknowledges support from the Natural Sciences and Engineering
    Research Council of Canada through the Discovery Grant program
    (RGPIN-05606-2015) and the Discovery Accelerator Supplement
    (RGPAS-478018-2015).} }

\titlerunning{Preconditioning of an HDG method for the Stokes problem}

\author{Sander Rhebergen \and Garth N.~Wells}

\institute{Sander Rhebergen \at
  Department of Applied Mathematics, University of Waterloo,
  Waterloo N2L~3G1, Canada \\
  \email{srheberg@uwaterloo.ca} \\
  ORCID:~0000-0001-6036-0356
  \and
  Garth N.~Wells \at
  Department of Engineering, University of Cambridge,
  Trumpington Street, Cambridge CB2~1PZ, United Kingdom \\
  \email{gnw20@cam.ac.uk} \\
  ORCID:~0000-0001-5291-7951
}

\date{}

\maketitle

\begin{abstract}
  We present optimal preconditioners for a recently introduced
  hybridized discontinuous Galerkin finite element discretization of
  the Stokes equations. Typical of hybridized discontinuous Galerkin
  methods, the method has degrees-of-freedom that can be eliminated
  locally (cell-wise), thereby significantly reducing the size of the
  global problem. Although the linear system becomes more complex to
  analyze after static condensation of these element
  degrees-of-freedom, the pressure Schur complement of the original
  and reduced problem are the same. Using this fact, we prove spectral
  equivalence of this Schur complement to two simple matrices, which
  is then used to formulate optimal preconditioners for the statically
  condensed problem.  Numerical simulations in two and three spatial
  dimensions demonstrate the good performance of the proposed
  preconditioners.

  \keywords{Stokes equations \and preconditioning \and hybridized
    methods \and discontinuous Galerkin \and finite element methods}
\end{abstract}

\section{Introduction}
\label{sec:introduction}

Recently, many hybridized discontinuous Galerkin (HDG) methods have
been introduced for incompressible flows. For the Stokes problem these
include \cite{Cockburn:2009c, Cockburn:2014b, Cockburn:2011,
  Nguyen:2010}, and for the Oseen and Navier--Stokes problems we refer
to \cite{Cesmelioglu:2013, Cesmelioglu:2017, Labeur:2012,
  Lehrenfeld:2016, Nguyen:2011, Qiu:2016, Rhebergen:2012}. We consider
the method developed in \cite{Labeur:2012} for the Navier--Stokes
equations, but with tighter restrictions on the `facet' function
spaces.  The method is appealing in its simplicity and the fact that
it can be formulated such that the approximate velocity field is
automatically pointwise divergence-free. However, the implementation
in \cite{Labeur:2012} does not yield a $H({\rm div})$-conforming
velocity field and, as consequence, cannot be simultaneously locally
mass conserving, locally momentum conserving and energy stable. This
issue was resolved in~\cite{Rhebergen:2017,Rhebergen:2018}, in which
the method was modified for the Stokes and Navier--Stokes equations
such that the approximate velocity fields are both pointwise
divergence-free and $H({\rm div})$-conforming.  This leads to a method
that is locally mass conserving, momentum conserving, energy stable,
and \emph{pressure-robust}~\cite{John:2017}, as shown numerically
in~\cite{Rhebergen:2018}. For the method in
\cite{Rhebergen:2017,Rhebergen:2018} to be useful in practice, it is
helpful if the discrete system arising from the method can be solved
efficiently by iterative methods. In this work we introduce and
analyze new preconditioners for the method applied to the Stokes
problem, and show that optimal preconditioners can be constructed.

A feature of the HDG approach is static condensation; element degrees of
freedom can be eliminated locally from the linear system, thereby
significantly reducing the size of the global problem. There are
different ways to apply static condensation to the HDG method
of~\cite{Rhebergen:2017}. One may choose, for example, to eliminate
both the element velocity and element pressure degrees-of-freedom. In
this paper, however, we choose to eliminate only the element velocity
degrees-of-freedom. In terms of reducing the global problem size, the
effect of eliminating the element pressure degrees-of-freedom is
minimal, whereas the reduction in global system size when eliminating
the element velocity degrees-of-freedom is substantial. We do not
consider elimination of the pressure degrees-of-freedom on cells as
retaining the cell pressure field will lead to a formulation on which
standard multigrid methods may be applied in the construction of optimal
preconditioners. This would not be possible if cell pressure
degrees-of-freedom are also eliminated from the original system.

The linear system obtained after static condensation of the cell-wise
velocity degrees-of-freedom is more complex to analyze than the original
linear system. However, the element/facet pressure Schur complement
remains unchanged. Using boundedness and stability results
of~\cite{Rhebergen:2017}, and a suitable inf-sup condition, we prove
spectral equivalence between the element/facet pressure Schur complement
and an element/facet pressure mass matrix. This allows the general
theory of~\citet{Pestana:2015} for preconditioners for saddle point
problems to be applied, which we use to develop two new preconditioners
for the condensed HDG discretization of the Stokes equations. Optimality
of the preconditioners for the HDG problem is proved, and numerical
examples demonstrate very good performance.

The remainder of this paper is structured as follows. In
\cref{sec:stokes} we describe the HDG method for the Stokes equations
and discuss and prove boundedness and stability results. These results
are then used to develop and analyze preconditioners for the condensed
form of the HDG discretization in \cref{s:preconditioning}. We verify
our analysis by two- and three-dimensional numerical simulations in
\cref{s:numerical_examples} and provide conclusions in
\cref{sec:conclusions}.

\section{Hybridizable discontinuous Galerkin method: formulation and
  analysis}
\label{sec:stokes}

We consider the Stokes system:
\begin{subequations}
  \label{eq:stokes}
  \begin{align}
    -\nabla^2u + \nabla p &= f & & \mbox{in} \ \Omega,
    \\
    \nabla\cdot u &= 0 & & \mbox{in}\ \Omega,
    \\
    u &= 0 & & \mbox{on}\ \partial\Omega,
    \\
    \int_{\Omega} p \dif x &= 0,
  \end{align}
\end{subequations}
where $\Omega\subset\mathbb{R}^d$ is a polygonal ($d = 2$) or polyhedral
($d = 3$) domain, $u : \Omega \to \mathbb{R}^d$ is the velocity, $p :
\Omega \to \mathbb{R}$ is the pressure, and $f : \Omega \to
\mathbb{R}^d$ is a prescribed body force.

\subsection{Notation}

To define the hybridizable discontinuous Galerkin method for the
Stokes equations, we introduce first a triangulation $\mathcal{T} :=
\{K\}$ of $\Omega$ consisting of non-overlapping cells. Each cell $K$
of the triangulation has a length measure $h_K$, and on the boundary
of an element, $\partial K$, the outward unit normal vector is denoted
by~$n$.  Two adjacent cells $K^+$ and $K^-$ share an interior facet
$F$, while a boundary facet is a facet of $\partial K$ that lies on
$\partial\Omega$.  The set and union of all facets are denoted by
$\mathcal{F} = \{F\}$ and $\Gamma^0$, respectively.

We will use the following finite element function spaces on~$\Omega$:
\begin{equation}
  \begin{split}
    V_h  &:= \cbr{v_h\in \sbr{L^2(\Omega)}^d
        : \ v_h \in \sbr{P_k(K)}^d, \ \forall\ K\in\mathcal{T}},
    \\
    Q_h &:= \cbr{q_h\in L^2(\Omega) : \ q_h \in P_{k-1}(K) ,\
    \forall \ K \in \mathcal{T}},
  \end{split}
  \label{eqn:spaces_cell}
\end{equation}
and the following finite element spaces on $\Gamma^0$,
\begin{equation}
  \begin{split}
    \bar{V}_h &:= \cbr{\bar{v}_h \in \sbr{L^2(\Gamma^0)}^d:\ \bar{v}_h \in
      \sbr{P_{k}(F)}^d\ \forall\ F \in \mathcal{F},\ \bar{v}_h
      = 0 \ \mbox{on}\ \partial\Omega},
    \\
    \bar{Q}_h &:= \cbr{\bar{q}_h \in L^2(\Gamma^0) : \ \bar{q}_h \in
      P_{k}(F) \ \forall\ F \in \mathcal{F}},
  \end{split}
  \label{eqn:spaces_facet}
\end{equation}
where $P_k(D)$ denotes the set of polynomials of degree at most~$k$ on a
domain~$D$. For convenience, we introduce the spaces $V_h^{\star} := V_h
\times \bar{V}_h$, $Q_h^{\star} := Q_h \times
\bar{Q}_h$, and $X_h^{\star} := V_h^{\star} \times Q_h^{\star}$.
Function pairs in $V_h^{\star}$ and $Q_h^{\star}$ will be denoted by
boldface, e.g., ${\bf v}_h := (v_h, \bar{v}_h) \in V_h^{\star}$ and
${\bf q}_h := (q_h, \bar{q}_h) \in Q_h^{\star}$.

On an element $K \subset \mathbb{R}^d$, for scalar functions $p, q \in
L^2(K)$, we denote the standard  inner-product by $(p, q)_K := \int_{K}
p q \dif x$, and  we define $(p, q)_{\mathcal{T}} :=
\sum_{K \in \mathcal{T}}(p, q)_K$. For scalar functions $p, q \in
L^2(E)$, where $E \subset \mathbb{R}^{d - 1}$, we define the
inner-product $\langle p, q \rangle_E := \int_E p q \dif s$ and $\langle
p, q
\rangle_{\partial\mathcal{T}} := \sum_K \langle p, q
\rangle_{\partial K}$. Similar inner-products hold for vector-valued
functions.

We use various norms throughout, and which are defined now. On $V_h$ and
$V_h^{\star}$ we define, respectively, the following `discrete'
$H^1$-norms:
\begin{align}
  \label{eq:stability_norm_dg}
  \tnorm{ v_h }_{DG}^2 &:= \sum_{K\in\mathcal{T}}\norm{\nabla v_h }^2_{K}
  + \sum_{K\in\mathcal{T}} \alpha h_K^{-1} \norm{v_h}^2_{\partial K},
  \\
  \label{eq:stability_norm}
  \tnorm{ {\bf v}_h }_v^2 &:= \sum_{K\in\mathcal{T}}\norm{\nabla v_h }^2_{K}
  + \sum_{K\in\mathcal{T}} \alpha  h_K^{-1} \norm{\bar{v}_h - v_h}^2_{\partial K},
\end{align}
where $\alpha > 0$ is a constant. For $\bar{v}_h \in \bar{V}_h$, we
introduce the norm
\begin{equation}
  \label{eq:def_norm_hK_ap}
  \tnorm{\bar{v}_h}_{h}^2 := \sum_{K\in\mathcal{T}_h}h_{K}^{-1}
    \norm{\bar{v}_h - m_K(\bar{v}_h)}_{\partial K}^2,
\end{equation}
where
\begin{equation}
  \label{eq:mean_pK}
  m_K(\bar{v}_h) := \frac{1}{|\partial K|}\int_{\partial K} \bar{v}_h \dif s.
\end{equation}
On $\bar{Q}_h$ and $Q^{\star}_h$ we define, respectively, `discrete'
$L^2$-norms,
\begin{equation}
  \label{eq:def_normbarq}
    \norm{\bar{q}_h}_{p}^2 := \sum_{K\in\mathcal{T}}h_K\norm{\bar{q}_h}^2_{\partial K}
    \quad \text{and} \quad
    \tnorm{{\bf q}_h}^2_p := \norm{q_h}^2_{\Omega} + \norm{\bar{q}_h}_{p}^2.
\end{equation}

\subsection{Weak formulation}

The weak formulation for the Stokes problem in \cref{eq:stokes} is given
in \cite{Labeur:2012,Rhebergen:2017}, and reads: given $f \in
\sbr{L^{2}(\Omega)}^{d}$, find $({\bf u}_h, {\bf p}_h)
\in X_{h}^{\star}$ such that
\begin{subequations}
  \begin{align}
    \label{eq:discrete_problem_a}
    a_h({\bf u}_h, {\bf v}_h) + b_h({\bf p}_h, {\bf v}_h)
    &=
    \del{v_h, f}_{\mathcal{T}} && \forall {\bf v}_h\in V_h^{\star},
    \\
    \label{eq:discrete_problem_b}
    b_h({\bf q}_h, {\bf u}_h) &= 0 && \forall {\bf q}_h\in Q_h^{\star},
  \end{align}
  \label{eq:discrete_problem}
\end{subequations}
where
\begin{subequations}
  \begin{align}
    \label{eq:formA}
    a_h({\bf w}_h, {\bf v}_h)
    :=&
    \del{\nabla w_h, \nabla v_h}_{\mathcal{T}}
    + \left\langle \alpha h^{-1}(w_h - \bar{w}_h), v_h - \bar{v}_h \right\rangle_{\partial\mathcal{T}}
    \\
    \nonumber
    & - \left\langle w_h - \bar{w}_h, \partial_n v_h \right\rangle_{\partial\mathcal{T}}
      - \left\langle \partial_n w_h, v_h - \bar{v}_h \right\rangle_{\partial\mathcal{T}},
    \\
    \label{eq:formB}
    b_h({\bf q}_h, {\bf v}_h)
    :=&
    - \del{q_h, \nabla \cdot v_h}_{\mathcal{T}}
    + \left\langle v_h \cdot n, \bar{q}_h \right\rangle_{\partial \mathcal{T}}.
  \end{align}
  \label{eq:bilin_forms}
\end{subequations}
It is proven in \cite{Rhebergen:2017,Wells:2011} that $\alpha$ can be
chosen sufficiently large to ensure stability.

The formulation is a hybridized method in the sense that the facet
function $\Bar{p}_{h}$ acts as a Lagrange multiplier enforcing that the
velocity field $u_{h}$ is $H({\rm div})$-conforming, and specifically
lies in a Brezzi--Douglas--Marini (BDM) finite element space
\cite{Boffi:book}.

The following results are from \cite{Rhebergen:2017} and will be used
in the analysis. For sufficiently large $\alpha$, the bilinear form
$a_h(\cdot, \cdot)$ is coercive and bounded, i.e., there exist constants
$c_a^s > 0$ and $c_a^b > 0$, independent of $h$, such that for all ${\bf
u}_h, {\bf v}_h \in V_h^{\star}$,
\begin{equation}
  \label{eq:stab_bound_ah}
  a_h({\bf v}_h, {\bf v}_h) \ge c_a^s\tnorm{{\bf v}_h}_v^2 \qquad\mbox{and}\qquad
  \envert{a_h({\bf u}_h, {\bf v}_h)} \le c_a^b\tnorm{{\bf u}_h}_v\tnorm{{\bf v}_h}_v.
\end{equation}
(see \cite[Lemmas~4.2 and~4.3]{Rhebergen:2017}) An immediate
consequence of \cref{eq:stab_bound_ah} is:
\begin{equation}
  \label{eq:norm_equiv_ah}
  c_a^s\tnorm{{\bf v}_h}_v^2 \le a_h({\bf v}_h, {\bf v}_h) \le c_a^b\tnorm{{\bf v}_h}_v^2.
\end{equation}
From \cite[Lemma~4.8 and Eq.~102]{Rhebergen:2017}, there exists a
constant $c_b^b > 0$, independent of $h$, such that for all ${\bf v}_h
\in V_h^{\star}$ and for all ${\bf q}_h \in Q_h^{\star}$
\begin{equation}
  \label{eq:bound_bh}
  \envert{b_h({\bf q}_h, {\bf v}_h)} \le c_b^b \tnorm{{\bf v}_h}_v\tnorm{{\bf q}_h}_p.
\end{equation}

\subsection{The inf-sup condition}
\label{ss:infsupcondition}

We present in this section a proof of inf-sup stability that is
simpler than that in \cite{Rhebergen:2017}, and which better lends
itself to the analysis of preconditioners.

The velocity--pressure coupling term in \cref{eq:discrete_problem} is
\begin{equation}
  \label{eq:formB_b1b2}
  b_h({\bf p}_h, {\bf v}_h) := b_1(p_h, {\bf v}_h) + b_2(\bar{p}_h, {\bf v}_h),
\end{equation}
where
\begin{equation}
  \label{eq:bh1bh2}
  b_1(p_h, {\bf v}_h) := -\sum_{K\in\mathcal{T}} \int_K p_h\nabla\cdot v_h \dif x
  \quad \text{and} \quad
  b_2(\bar{p}_h, {\bf v}_h) := \sum_{K\in\mathcal{T}}\int_{\partial K} v_h\cdot n \bar{p}_h \dif s,
\end{equation}
The main result of this section is stability of~$b_h(\cdot, \cdot):
Q_h^{\star} \times V_h^{\star} \to \mathbb{R}$, which we first state and
then prove after some intermediate results.
\begin{lemma}[Stability of $b_h$]
  \label{thm:stab_bh}
  There exists a constant $\beta_p > 0$, independent of $h$, such that
  for all~${\bf q}_h \in Q_h^{\star}$
  \begin{equation}
    \label{eq:stab_bh}
    \beta_p \tnorm{{\bf q}_h}_{p} \le \sup_{{\bf v}_h\in V_h^{\star}}
    \frac{ b_h({\bf q}_h, {\bf v}_h) }{\tnorm{ {\bf v}_h }_v}.
  \end{equation}
\end{lemma}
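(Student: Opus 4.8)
The plan is to prove the inf-sup condition by exhibiting, for each fixed $\mathbf{q}_h = (q_h, \bar q_h) \in Q_h^\star$, a test function $\mathbf{v}_h \in V_h^\star$ for which $b_h(\mathbf{q}_h, \mathbf{v}_h)$ controls $\tnorm{\mathbf{q}_h}_p^2 = \norm{q_h}_\Omega^2 + \norm{\bar q_h}_p^2$ from below while $\tnorm{\mathbf{v}_h}_v$ is controlled from above by $\tnorm{\mathbf{q}_h}_p$. Recalling the splitting $b_h = b_1 + b_2$ from \cref{eq:formB_b1b2,eq:bh1bh2}, I would separately build a test function that captures the cell-pressure part $\norm{q_h}_\Omega$ through $b_1$ and one that captures the facet-pressure part $\norm{\bar q_h}_p$ through $b_2$, then combine them. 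The standard device for the cell part is the continuous inf-sup (Nečas) stability of the divergence: there exists $w \in [H_0^1(\Omega)]^d$ with $\nabla\cdot w = -q_h$ (to match the sign in $b_1$) and $\norm{w}_{H^1} \lesssim \norm{q_h}_\Omega$; projecting $w$ into the discrete velocity space and using the $\mathbf{v}_h$-norm bound on the projection gives a test function whose $b_1$-action is bounded below by $\norm{q_h}_\Omega^2$.

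First I would treat the facet pressure. The natural choice is a local (cell-wise) test function built from $\bar q_h$ on each $\partial K$: pick $v_h$ on each cell so that its normal trace on $\partial K$ matches $\bar q_h$ in an appropriate weighted sense, making $b_2(\bar q_h, \mathbf{v}_h) = \langle v_h\cdot n, \bar q_h\rangle_{\partial\mathcal{T}}$ reproduce $\sum_K h_K \norm{\bar q_h}_{\partial K}^2$ up to constants, i.e.\ $\norm{\bar q_h}_p^2$. Because this is a local construction, one can arrange the discrete velocity to vanish on cell interiors enough that $\tnorm{\mathbf{v}_h}_v \lesssim \norm{\bar q_h}_p$; the scaling by $h_K$ in the $p$-norm \eqref{eq:def_normbarq} is exactly what makes the jump/penalty terms in $\tnorm{\cdot}_v$ from \eqref{eq:stability_norm} come out with the right power of $h_K$ after an inverse inequality on each cell. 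Second, I would handle the cross terms: the facet test function may contribute a nonzero $b_1$ and the global (Nečas-based) test function may contribute a nonzero $b_2$. I would either make the facet construction divergence-free cell-wise so its $b_1$-contribution vanishes, or absorb the cross terms using Young's inequality, choosing the linear combination $\mathbf{v}_h = \mathbf{v}_h^{(1)} + \delta\,\mathbf{v}_h^{(2)}$ with a small fixed parameter $\delta$ so that the diagonal positive terms dominate.

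The main obstacle will be the second step: controlling the cross coupling so that the two partial constructions do not cancel each other in $b_h = b_1 + b_2$. Unlike a conforming setting where one can pick a single globally divergence-free corrector, here the facet contribution and the interior contribution live in different pieces of $b_h$, and a naive sum need not be bounded below. I expect the cleanest route is to verify that the facet test function can be chosen with $\nabla\cdot v_h = 0$ on each $K$ (so $b_1$ sees only the Nečas part) and that the Nečas-based test function has small, Young-absorbable normal-trace coupling into $b_2$; the $h_K$ weighting in $\norm{\bar q_h}_p$ and the discrete trace/inverse inequalities are what let these estimates close with constants independent of $h$. Assembling these bounds yields $b_h(\mathbf{q}_h,\mathbf{v}_h) \gtrsim \tnorm{\mathbf{q}_h}_p^2$ with $\tnorm{\mathbf{v}_h}_v \lesssim \tnorm{\mathbf{q}_h}_p$, and dividing gives the claimed $\beta_p$.
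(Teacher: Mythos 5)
Your proposal is correct in substance, but it takes a genuinely different route from the paper. The paper uses the same splitting $b_h = b_1 + b_2$ from \cref{eq:formB_b1b2}, but instead of constructing a single combined test function it invokes the abstract equivalence result of Howell and Walkington (\cref{thm:howellWalkington}): the combined inf-sup condition holds if and only if $b_2$ is inf-sup stable over all of $V_h^{\star}$ and $b_1$ is inf-sup stable over the kernel $Z_{b_2}$, which is identified as the $H({\rm div})$-conforming subspace $V_h^{\star{\rm BDM}}$. The $b_1$ condition on that kernel is cited from \cite[Lemma~4.4]{Rhebergen:2017} (\cref{thm:stab_b1}), and the $b_2$ condition is proved with the same local BDM normal-trace lifting you propose (\cref{thm:stab_b2}). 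Your argument in effect inlines the proof of the abstract theorem --- the $\delta$-weighted sum of two partial test functions with Young absorption is exactly how such equivalences are established --- and re-derives \cref{thm:stab_b1} via Ne\v{c}as plus a Fortin/BDM interpolant. What your route buys is a self-contained elementary proof; moreover, your per-cell weighting (taking $v_h|_K = h_K L^{\rm BDM}\bar{q}_h$ so that $b_2$ reproduces $\sum_K h_K \norm{\bar{q}_h}^2_{\partial K}$ directly) yields a constant free of the mesh-grading factor $c_{*} = h_{\min}/h_{\max}$ that appears in the paper's own chain of estimates in \cref{eq:infsuponK}. What the paper's route buys is that no cross-term bookkeeping is needed at all: once $Z_{b_2} = V_h^{\star{\rm BDM}}$ is observed, the two conditions decouple cleanly.

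One concrete correction to your second step: the facet test function can \emph{never} be chosen divergence-free cell-wise in general, since the divergence theorem forces $\int_K \nabla\cdot v_h \dif x = \int_{\partial K} \bar{q}_h \dif s$, which is nonzero whenever $\bar{q}_h$ has nonzero mean on $\partial K$. So of your two options only the Young-absorption route is viable for that cross term; it does close, since by an inverse estimate and \cref{eq:localBDM} one gets $\norm{\nabla\cdot (h_K L^{\rm BDM}\bar{q}_h)}_K \le c\, h_K^{1/2} \norm{\bar{q}_h}_{\partial K}$, hence $\envert{b_1(q_h, {\bf v}_h^{(2)})} \le c \norm{q_h}_{\Omega} \norm{\bar{q}_h}_{p}$, which the small parameter $\delta$ absorbs. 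The cross term in the other direction is better than you feared: if the Ne\v{c}as corrector is projected with the BDM (Fortin) interpolant, it is $H({\rm div})$-conforming with zero normal trace on $\partial\Omega$, so its contributions to $b_2$ cancel facet-by-facet and vanish exactly --- this is precisely the kernel characterization $Z_{b_2} = V_h^{\star{\rm BDM}}$ that underlies the paper's proof (note also that $b_h$ does not act on the facet velocity component, so the facet part of your test function only needs to be chosen so that $\tnorm{{\bf v}_h}_v$ remains controlled). With these two adjustments your assembly yields \cref{eq:stab_bh} as claimed.
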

Satisfaction of the stability condition does rely on a suitable
combination of function spaces, as chosen in
\cref{eqn:spaces_cell,eqn:spaces_facet}.

The following is a reduced version of~\cite[Theorem~3.1]{Howell:2011}.
\begin{theorem}
  \label{thm:howellWalkington}
  Let $U$, $P_1$, and $P_2$ be reflexive Banach spaces, and let $b_1:
  P_1 \times U \to \mathbb{R}$, and $b_2 : P_2 \times U \to
  \mathbb{R}$ be bilinear and continuous. Let
  \begin{equation}
    Z_{b_i} = \cbr{ v \in U : b_i(p_i, v) = 0 \quad \forall p_i \in P_i} \subset U, \quad i=1,2,
  \end{equation}
  then the following are equivalent:
  \begin{enumerate}
  \item There exists $c > 0$ such that
    \begin{equation*}
      \label{eq:combined_inf_sup}
      \sup_{v\in U} \frac{b_1(p_1, v) + b_2(p_2, v)}{\norm{v}_U} \ge c\del{\norm{p_1}_{P_1} + \norm{p_2}_{P_2}}
      \quad (p_1, p_2) \in P_1 \times P_2.
    \end{equation*}
  \item There exists $c > 0$ such that
    \begin{equation*}
      \label{eq:proofOverZb2U}
      \sup_{v \in Z_{b_2}} \frac{b_1(p_1, v)}{\norm{v}_U} \ge c \norm{p_1}_{P_1},\ p_1\in P_1 \ \mbox{and} \
      \sup_{v \in U} \frac{b_2(p_2, v)}{\norm{v}_U} \ge c \norm{p_2}_{P_2},\ p_2\in P_2.
    \end{equation*}
  \end{enumerate}
\end{theorem}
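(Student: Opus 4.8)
The plan is to recast both statements as surjectivity properties of operators and to reduce the equivalence to an elementary algebraic fact, invoking the closed range theorem to pass between inf-sup bounds and surjectivity. Define the bounded linear operators $B_1 : U \to P_1'$, $B_2 : U \to P_2'$ and $B := (B_1, B_2) : U \to P_1' \times P_2'$ by $\langle B_1 v, p_1\rangle = b_1(p_1, v)$, $\langle B_2 v, p_2 \rangle = b_2(p_2, v)$, and $Bv = (B_1 v, B_2 v)$; continuity of $b_1, b_2$ guarantees boundedness. By definition of the dual norm, the quotient $\sup_{v} b_i(p_i,v)/\norm{v}_U$ equals $\norm{B_i' p_i}_{U'}$, so each inf-sup inequality says precisely that an adjoint is bounded below. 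Since $U$, $P_1$, $P_2$ are reflexive, the closed range theorem turns ``adjoint bounded below'' into ``operator surjective''. Thus condition~(1) is equivalent to surjectivity of $B$, the second inequality in~(2) is equivalent to surjectivity of $B_2$, and the first inequality in~(2) --- where the supremum runs over $Z_{b_2} = \ker B_2$ --- is equivalent to surjectivity of the restriction $B_1|_{Z_{b_2}} : Z_{b_2} \to P_1'$. Note that $Z_{b_2}$ is closed, as the kernel of a bounded operator, hence itself reflexive, so the closed range theorem applies to this restriction as well.

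With this dictionary in place, the theorem reduces to: $B$ is surjective if and only if $B_2$ is surjective and $B_1|_{Z_{b_2}}$ is surjective. The forward implication is immediate: given $f_2 \in P_2'$, solving $Bv = (0, f_2)$ yields $B_2 v = f_2$; and given $f_1 \in P_1'$, solving $Bv = (f_1, 0)$ yields $v \in \ker B_2 = Z_{b_2}$ with $B_1 v = f_1$. For the converse, given $(f_1, f_2)$, first use surjectivity of $B_2$ to pick $w \in U$ with $B_2 w = f_2$, then use surjectivity of $B_1|_{Z_{b_2}}$ to pick $z \in Z_{b_2}$ with $B_1 z = f_1 - B_1 w$; the sum $v = w + z$ satisfies $B_2 v = f_2$ (since $B_2 z = 0$) and $B_1 v = f_1$, so $Bv = (f_1, f_2)$. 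Existence of the constants $c > 0$ in each direction then follows from the open mapping theorem applied to the relevant surjective operators.

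Should one prefer an argument that produces the combined constant more explicitly, the implication (2)$\Rightarrow$(1) can instead be carried out by a direct test-function construction. Given $(p_1, p_2)$, the $b_2$-inf-sup supplies $v_2 \in U$ with $b_2(p_2, v_2) \gtrsim \norm{p_2}_{P_2}\norm{v_2}_U$, and the $Z_{b_2}$-inf-sup supplies $v_1 \in Z_{b_2}$ with $b_1(p_1, v_1) \gtrsim \norm{p_1}_{P_1}\norm{v_1}_U$ and, crucially, $b_2(p_2, v_1) = 0$. Testing the combined form against $v = v_1 + \delta v_2$ gives $b_1(p_1, v) + b_2(p_2, v) = b_1(p_1, v_1) + \delta\, b_1(p_1, v_2) + \delta\, b_2(p_2, v_2)$, and the only indefinite term $\delta\, b_1(p_1, v_2)$ is controlled by continuity of $b_1$; choosing $\delta > 0$ small enough to absorb it into the two positive contributions yields the combined lower bound.

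The main obstacle is the functional-analytic bookkeeping rather than any single estimate: one must verify that reflexivity legitimizes the ``bounded below $\Leftrightarrow$ surjective'' passage in both the full and the restricted ($Z_{b_2}$) settings, and be careful that $Z_{b_2}$ is closed so that it inherits reflexivity and the closed range theorem remains available there. In the explicit variant the delicate point is the selection of the scaling parameter $\delta$, which must depend only on the continuity constant of $b_1$ and the two inf-sup constants so that the resulting $c$ is genuinely independent of $(p_1, p_2)$.
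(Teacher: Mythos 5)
The paper offers no proof of this theorem at all: it is stated as ``a reduced version of~\cite[Theorem~3.1]{Howell:2011}'' and the proof is deferred entirely to that reference, so there is no in-paper argument to compare against. Your proof is correct, and it is in substance a reconstruction of the operator-theoretic argument behind the cited result: you translate each inf-sup inequality into bounded-belowness of a (pre-)adjoint, use reflexivity together with the closed range theorem to convert these into surjectivity of $B$, of $B_2$, and of $B_1|_{Z_{b_2}}$, and then reduce the equivalence to the elementary algebraic fact that $B=(B_1,B_2)$ is onto $P_1'\times P_2'$ iff $B_2$ is onto and $B_1$ restricted to $Z_{b_2}=\ker B_2$ is onto. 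You correctly identify the two delicate points: reflexivity of the $P_i$ is what legitimizes passing from bounded-belowness on $P_i$ (rather than on $P_i''$) to surjectivity, and closedness of $Z_{b_2}$ is what makes the restricted operator amenable to the same machinery. Two minor touch-ups: in the combined condition, the sum norm $\norm{p_1}_{P_1}+\norm{p_2}_{P_2}$ on $P_1\times P_2$ is equivalent to any standard product norm, so surjectivity of $B$ delivers the stated form of (1) only up to a change of constant, which should be said explicitly; and in your explicit variant of (2)$\Rightarrow$(1), the suprema need not be attained, so $v_1$ and $v_2$ should be chosen to realize the respective inf-sup bounds up to a fixed factor (say $1/2$), after which your choice of $\delta$ depending only on the continuity constant of $b_1$ and the two inf-sup constants indeed gives a uniform $c$. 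The direct test-function construction is a worthwhile addition over the pure closed-range route, since it produces an explicit combined constant; conversely, note that the genuinely nontrivial half of (1)$\Rightarrow$(2) --- that the supremum for $b_1$ may be restricted to $Z_{b_2}$ --- admits no similarly elementary construction, and your surjectivity argument (solving $Bv=(f_1,0)$, which forces $v\in\ker B_2$) is exactly the right way to capture it.
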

\Cref{thm:howellWalkington} allows $b_{1}$ and $b_{2}$ in
\cref{eq:formB_b1b2} to be analyzed separately.

\begin{lemma}[Stability of $b_1$]
  \label{thm:stab_b1}
  Let $V_h^{\rm BDM}$ be a Brezzi--Douglas--Marini (BDM) finite
  element space \cite{Boffi:book}:
  \begin{equation}
    \begin{split}
      V_h^{\rm BDM}(K)
      &:=
      \cbr{ v_h \in \sbr{P_k(K)}^d : v_h\cdot n \in L^2(\partial K),\
        v_h\cdot n|_F\in P_k(F)},
      \\
      V_h^{\rm BDM}
      &:=
       \cbr{v_h \in H({\rm div};\Omega) :\ v_h|_K \in V_h^{\rm BDM}(K),\
        \forall K \in \mathcal{T}}.
    \end{split}
  \end{equation}
  There exists a constant $\beta > 0$, independent of $h$, such that
  for all~$q_h \in Q_h$
  \begin{equation}
    \label{eq:stab_b1}
    \beta \norm{q_h}_{0,\Omega} \le \sup_{{\bf v}_h\in V_h^{\star{\rm BDM}}}
    \frac{ b_1(q_h, {\bf v}_h) }{\tnorm{ {\bf v}_h }_v}.
  \end{equation}
\end{lemma}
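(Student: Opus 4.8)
The plan is to establish \cref{eq:stab_b1} by a Fortin-type construction, reducing it to surjectivity of the divergence onto $Q_h$ combined with the interpolation and commuting-diagram properties of the Brezzi--Douglas--Marini interpolant. First I would fix $q_h \in Q_h$ and invoke the continuous inf-sup (Bogovski\u\i) result for the divergence: there exists $w \in \sbr{H_0^1(\Omega)}^d$ with $\nabla \cdot w = -q_h$ and $\norm{w}_{1,\Omega} \le C\norm{q_h}_{0,\Omega}$. Setting $v_h := \Pi_h^{\rm BDM} w \in V_h^{\rm BDM}$, the commuting-diagram identity $\nabla \cdot \Pi_h^{\rm BDM} w = P_{Q_h}(\nabla \cdot w)$ together with $q_h \in Q_h$ gives $\nabla \cdot v_h = -q_h$, so that $b_1(q_h, {\bf v}_h) = -\del{q_h, \nabla \cdot v_h}_{\mathcal{T}} = \norm{q_h}_{0,\Omega}^2$, which handles the numerator of \cref{eq:stab_b1} exactly.

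It then remains to choose the facet component $\bar{v}_h \in \bar{V}_h$ and to bound $\tnorm{{\bf v}_h}_v$ by $\norm{q_h}_{0,\Omega}$. I would take $\bar{v}_h$ to be a polynomial projection onto $\bar{V}_h$ of the (single-valued) trace of $w$ on each facet; since $w$ vanishes on $\partial\Omega$ this choice respects the boundary condition built into $\bar{V}_h$. Because $w$ is continuous across interior facets, on each $\partial K$ one writes $\bar{v}_h - v_h = (\bar{v}_h - w) - (v_h - w)$ and bounds the two pieces separately. Standard interpolation and trace estimates give $\norm{v_h - w}_{\partial K} \le C h_K^{1/2}\norm{w}_{1,K}$ and $\norm{\bar{v}_h - w}_{\partial K} \le C h_K^{1/2}\norm{w}_{1,K}$, while $H^1$-stability of the BDM interpolant gives $\norm{\nabla v_h}_K \le C \norm{w}_{1,K}$. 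Inserting these into \cref{eq:stability_norm}, multiplying the facet terms by $\alpha h_K^{-1}$ and summing over $\mathcal{T}$ yields $\tnorm{{\bf v}_h}_v^2 \le C\norm{w}_{1,\Omega}^2 \le C\norm{q_h}_{0,\Omega}^2$. Combined with the computation of the numerator, this produces $b_1(q_h, {\bf v}_h) \ge \beta \norm{q_h}_{0,\Omega}\tnorm{{\bf v}_h}_v$ with $\beta$ depending only on $C$ and $\alpha$, which is the claimed bound.

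The main obstacle is the norm estimate $\tnorm{{\bf v}_h}_v \le C\norm{q_h}_{0,\Omega}$ rather than the existence of the candidate velocity. A direct appeal to the classical discrete BDM inf-sup would only control $\norm{v_h}_{H(\mathrm{div})}$, which is too weak for the $H^1$-type norm $\tnorm{\cdot}_v$; the genuinely $H^1$-stable Fortin interpolant is therefore essential, and verifying its $H^1$-stability and the $h_K^{1/2}$-scaled facet estimates is where the real work lies. A second point requiring care is the boundary condition inherited by $\bar{V}_h$ from \cref{eqn:spaces_facet}: one must guarantee that the constructed $\bar{v}_h$ vanishes on $\partial\Omega$, which is precisely why $w$ is taken in $\sbr{H_0^1(\Omega)}^d$ so that its facet trace is zero there. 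Finally, one should confirm that the facet jumps are controlled by $h_K^{1/2}\norm{w}_{1,K}$ and not by $h_K^{-1/2}$-weighted $L^2$ norms of $w$; the continuity of $w$ across interior facets and its vanishing trace on $\partial\Omega$ are exactly the properties that make this possible.
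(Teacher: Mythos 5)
Your proof is correct and is essentially the paper's own: the paper proves this lemma simply by citing \cite[Lemma~4.4]{Rhebergen:2017}, whose argument is exactly your Fortin-type construction --- a Bogovskii field $w \in \sbr{H_0^1(\Omega)}^d$ with $\nabla \cdot w = -q_h$, the BDM interpolant with its commuting-diagram identity so that $\nabla \cdot v_h = -q_h$, a facet-wise $L^2$-projection of the trace of $w$ for $\bar{v}_h$, and the scaled trace/interpolation estimates yielding $\tnorm{{\bf v}_h}_v \le C\norm{q_h}_{0,\Omega}$. The one point worth making explicit is that the Bogovskii step requires $q_h$ to have zero mean, a normalization implicit in the statement since the pressure is determined only up to constants (cf.\ the constraint $\int_\Omega p \dif x = 0$ and the exclusion of the constant vector in \cref{ss:discretesystem}).
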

\begin{proof}
  See~\cite[Lemma~4.4]{Rhebergen:2017}. \qed
\end{proof}

\begin{definition}[BDM lifting operator]
  Let $L^{\rm BDM} : P_k(\partial K) \to \sbr{P_k(K)}^d$ be a BDM
  local lifting of the normal trace defined via the BDM interpolant
  \cite[Example~2.5.1]{Boffi:book} with zero on the interior, which
  has the properties:
  \begin{equation}
    \label{eq:localBDM}
    \del{L^{\rm BDM}\bar{q}_h}\cdot n = \bar{q}_h
      \quad \mbox{and} \quad
    \norm{L^{\rm BDM}\bar{q}_h}_K
    \le
    ch_K^{1/2}\norm{\bar{q}_h}_{\partial K} \qquad
    \forall \bar{q}_h\in P_k(\partial K),
  \end{equation}
  where the inequality follows by a scaling argument.
\end{definition}

It follows then by an inverse estimate that
\begin{equation}
  \label{eq:bound_nabla_BDM}
  \norm{\nabla L^{\rm BDM}\bar{q}_h}_K^2 \le ch_K^{-2}\norm{L^{\rm BDM}\bar{q}_h}_K^2
  \le ch_K^{-1}\norm{\bar{q}_h}^2_{\partial K},
\end{equation}
and by the trace inequality that
\begin{equation}
  \label{eq:bound_boundary_BDM}
  \norm{L^{\rm BDM}\bar{q}_h}_{\partial K}^2
  \le
  ch_K^{-1}\norm{L^{\rm BDM}\bar{q}_h}_{K}^2 \le c\norm{\bar{q}_h}_{\partial K}^2.
\end{equation}
which yields
\begin{equation}
  \norm{\nabla L^{\rm BDM} \bar{q}_h}_{K}^2
  + \frac{\alpha_v}{h_K} \norm{L^{\rm BDM} \bar{q}_h}^2_{\partial K}
  \le
  c h_K^{-1} \norm{\bar{q}_h}_{\partial K}^2.
\end{equation}

\begin{lemma}[Stability of $b_2$]
\label{thm:stab_b2}
  There exists a constant $\bar{\beta} > 0$, independent of $h$, such
  that for all~$\bar{q}_h \in \bar{Q}_h$
  \begin{equation}
    \label{eq:stab_b2}
    \bar{\beta} \norm{\bar{q}_h}_{p} \le \sup_{{\bf v}_h\in V_h^{\star}}
    \frac{ b_2(\bar{q}_h, {\bf v}_h) }{\tnorm{ {\bf v}_h }_v}.
  \end{equation}
\end{lemma}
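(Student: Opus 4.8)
The plan is to prove the inf-sup bound for $b_2$ by exhibiting, for each $\bar{q}_h \in \bar{Q}_h$, an explicit test function ${\bf v}_h \in V_h^{\star}$, so that the supremum in \cref{eq:stab_b2} is bounded below by the ratio evaluated at this single choice. The natural candidate comes from the BDM lifting: because $L^{\rm BDM}$ reproduces the normal trace, a velocity whose normal component on $\partial K$ equals $\bar{q}_h$ will make $b_2(\bar{q}_h, {\bf v}_h)$ reproduce the facet pressure norm. Concretely, I would take $\bar{v}_h = 0$ and, cell-by-cell, $v_h|_K = h_K\, L^{\rm BDM}(\bar{q}_h|_{\partial K})$. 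This lies in $V_h$ since $L^{\rm BDM}$ maps into $\sbr{P_k(K)}^d$, so indeed $(v_h, 0) \in V_h^{\star}$.

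First I would compute the numerator exactly. By the trace-reproduction property in \cref{eq:localBDM}, $v_h \cdot n = h_K \bar{q}_h$ on $\partial K$, so from the definition of $b_2$ in \cref{eq:bh1bh2} and of $\norm{\cdot}_p$ in \cref{eq:def_normbarq},
\begin{equation*}
  b_2(\bar{q}_h, {\bf v}_h) = \sum_{K\in\mathcal{T}} \int_{\partial K} h_K \bar{q}_h^2 \dif s = \sum_{K\in\mathcal{T}} h_K \norm{\bar{q}_h}^2_{\partial K} = \norm{\bar{q}_h}_p^2 .
\end{equation*}
For the denominator, since $\bar{v}_h = 0$ the norm \cref{eq:stability_norm} reduces to $\tnorm{{\bf v}_h}_v^2 = \sum_K \del{\norm{\nabla v_h}^2_K + \alpha h_K^{-1}\norm{v_h}^2_{\partial K}}$. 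Pulling out the factor $h_K^2$ and invoking the elementwise estimates \cref{eq:bound_nabla_BDM} and \cref{eq:bound_boundary_BDM} (equivalently, the combined bound displayed immediately before the lemma statement) gives, on each cell, $\norm{\nabla v_h}^2_K + \alpha h_K^{-1}\norm{v_h}^2_{\partial K} \le c\, h_K \norm{\bar{q}_h}^2_{\partial K}$; summing over $K$ yields $\tnorm{{\bf v}_h}_v^2 \le c\,\norm{\bar{q}_h}_p^2$. Combining the two estimates, $b_2(\bar{q}_h, {\bf v}_h)/\tnorm{{\bf v}_h}_v \ge c^{-1/2}\norm{\bar{q}_h}_p$, and since the supremum dominates this particular ratio the claim follows with $\bar{\beta} = c^{-1/2}$.

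The only delicate point is the choice of the $h_K$ weight in the lifting, and I expect this to be the crux. Without it, the numerator would scale as $\sum_K \norm{\bar{q}_h}^2_{\partial K}$ while the denominator would scale as $\del{\sum_K h_K^{-1}\norm{\bar{q}_h}^2_{\partial K}}^{1/2}$, so the resulting bound would carry the wrong, mesh-dependent power of $h_K$ relative to the target norm $\norm{\bar{q}_h}_p$. Multiplying the lifting by $h_K$ makes both the numerator and $\tnorm{{\bf v}_h}_v^2$ scale precisely like $\norm{\bar{q}_h}_p^2$, which is exactly what is needed for an $h$-independent constant; verifying this consistent scaling relies essentially on the BDM estimates \cref{eq:bound_nabla_BDM} and \cref{eq:bound_boundary_BDM}.
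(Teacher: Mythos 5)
Your proof is correct, and it takes a genuinely different route from the paper's. The paper uses the same BDM lifting but \emph{unweighted}: it takes ${\bf v}_h = (L^{\rm BDM}\bar{q}_h, 0)$ and argues globally in \cref{eq:infsuponK}, bounding $\tnorm{(L^{\rm BDM}\bar{q}_h,0)}_v$ by $c\sum_{K} h_K^{-1/2}\norm{\bar{q}_h}_{\partial K}$; since the numerator $\sum_K\norm{\bar{q}_h}^2_{\partial K}$ and this denominator then carry mismatched powers of $h_K$, the paper has to trade $h_{\min}$ against $h_{\max}$ and lands on the constant $c\,c_{*}^{1/2}$ with $c_{*} = h_{\min}/h_{\max}$, which is bounded below independently of $h$ only for quasi-uniform mesh families (and, incidentally, the paper's displayed chain mixes sums of norms with sums of squared norms, so it really needs the $\ell^2$ version of the same argument to be airtight). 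Your cell-wise weighting $v_h|_K = h_K\, L^{\rm BDM}(\bar{q}_h|_{\partial K})$ --- admissible because $V_h$ is a broken space, so no interelement continuity is required, and on interior facets the two one-sided contributions simply add to reproduce $\norm{\bar{q}_h}_p^2$ exactly --- removes the mismatch entirely: by the trace-reproduction property \cref{eq:localBDM} the numerator equals $\norm{\bar{q}_h}_p^2$, and by \cref{eq:bound_nabla_BDM,eq:bound_boundary_BDM} each cell contributes at most $c\,h_K\norm{\bar{q}_h}^2_{\partial K}$ to $\tnorm{{\bf v}_h}_v^2$, so every estimate closes cell-by-cell \emph{before} summation and the resulting $\bar{\beta}$ is independent of both $h$ and the mesh grading. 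Your closing remark identifies the crux correctly: the $h_K$ weight is exactly what aligns all scalings with the weighted norm $\norm{\cdot}_p$ of \cref{eq:def_normbarq}, and your version is the stronger, grading-robust form of the lemma, while the paper's unweighted route buys nothing in exchange.
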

\begin{proof}
  Summing over all cells and by definition of the norm
  $\tnorm{(\cdot, \cdot)}_v$ in \cref{eq:stability_norm},
  \begin{equation}
    \tnorm{(L^{\rm BDM}\bar{q}_h, 0)}_v
    \le c \sum_{K \in \mathcal{T}} h_K^{-1/2} \norm{\bar{q}_h}_{\partial K}.
  \end{equation}
  Using the above, we have:
  \begin{equation}
    \label{eq:infsuponK}
    \begin{split}
      \sup_{{\bf v}_h\in V_h^{\star}}
      \frac{ \sum_{K\in\mathcal{T}}\int_{\partial K} v_h\cdot n \bar{q}_h \dif s}{\tnorm{{\bf v}_h}_v}
      &\ge
      \frac{ \sum_{K\in\mathcal{T}}\int_{\partial K} \bar{q}_h^2 \dif s}{\tnorm{(L^{\rm BDM}\bar{q}_h,0)}_v}
      \\
      &\ge
      c\frac{\sum_{K\in\mathcal{T}} \norm{\bar{q}_h}^2_{\partial K}}{\sum_{K\in\mathcal{T}}h_K^{-1/2}\norm{\bar{q}_h}_{\partial K}}
      \\
      &\ge c h_{\min}^{1/2} \sum_{K\in\mathcal{T}} \norm{\bar{q}_h}_{\partial K}
      \\
      &\ge c c_{*}^{1/2} \norm{\bar{q}_h}_p,
    \end{split}
  \end{equation}
  where $c_{*} = h_{\min}/h_{\max}$. \qed
\end{proof}

We can now prove the main stability result.
\begin{proof}[Proof of \cref{thm:stab_bh}]
  Using \cref{thm:howellWalkington}, let $b_1(\cdot,\cdot)$ and
  $b_2(\cdot, \cdot)$ be defined as in \cref{eq:bh1bh2}, let $U =
  V_h^{\star}$, $P_1 = Q_h$ and $P_2 = \bar{Q}_h$. Furthermore, note
  that $Z_{b_2} = V_h^{\star{\rm BDM}} \subset V_h^{\star}$. The
  conditions in \cref{eq:proofOverZb2U} were proven in
  \cref{thm:stab_b1,thm:stab_b2}, respectively. By equivalence of
  \cref{eq:combined_inf_sup,eq:proofOverZb2U}, we obtain
  \begin{equation}
    \label{eq:combined_inf_sup_QhbarQh}
      \sup_{{\bf v}_h \in V_h^{\star}} \frac{b_1(q_h, {\bf v}_h) + b_2(\bar{q}_h, {\bf v}_h)}{\tnorm{{\bf v}_h}_v}
      \ge c\del{\norm{q_h}_{0,\Omega} + \norm{\bar{q}_h}_{p}} \quad (q_h, \bar{q}_h) \in Q_h \times \bar{Q}_h,
  \end{equation}
  from which \cref{eq:stab_bh} follows. \qed
\end{proof}

\subsection{Reduced problem}

In practice, a reduced global problem is solved in which $u_{h}$ is
eliminated cell-wise. We present the reduced problem in a variational
setting here for later use in constructing preconditioners.  To
formulate a reduced problem, we first introduce local solvers.
\begin{definition}[Local solver]
  \label{def:localsolver_alt}
  On an element $K$, consider the `local' bilinear and linear forms:
  \begin{equation}
    a_{K}(v_{h}, w_{h}) := \del{\nabla v_h, \nabla w_h}_K
    - \left\langle \partial_n v_h, w_h \right\rangle_{\partial K}
    - \left\langle v_h, \partial_n w_h \right\rangle_{\partial K}
     + \alpha  h_K^{-1} \left\langle v_h,  w_h \right\rangle_{\partial K}
  \end{equation}
  and
  \begin{multline}
    L_{K}(w_{h}) := \del{s, w_h}_K
      - \left\langle \partial_n w_h, \bar{m}_h \right\rangle_{\partial K}
        + \alpha  h_K^{-1} \left\langle w_h, \bar{m}_h \right\rangle_{\partial K}
        \\
        + \del{\nabla\cdot w_h, r_h}_K
        - \left\langle w_h \cdot n, \bar{r}_h \right\rangle_{\partial K}.
  \end{multline}
  The function $v_h^{L}(\bar{m}_h, r_h, \bar{r}_h, s) \in V_{h}$ is such
  that its restriction to element $K$ satisfies the local problem: given
  $s \in \sbr{L^2(\Omega)}^d$ and $(\bar{m}_h, r_h, \bar{r}_h) \in
  \bar{V}_h \times Q_h \times \bar{Q}_h$
  \begin{equation}
    a_{K}\del{v_{h}^{L}, w_{h}} = L_{K}\del{w_{h}} \quad \forall w_{h} \in V(K).
  \label{eqn:local_problem}
  \end{equation}
  where $V(K) := \sbr{P_k(K)}^d$ the polynomial space in which the
  velocity is approximated on a cell.
\end{definition}

We next state the weak formulation of the Stokes problem in which $u_h$
is eliminated from \cref{eq:discrete_problem} by using the local solver
to express the velocity field and the velocity test function on cells,
and phrasing the problem in terms of the pressure trial/test function on
cells and the interface functions.
\begin{lemma}[Weak formulation of the reduced Stokes problem]
  \label{lem:statcon_problem_alt}
  Suppose $({\bf u}_h, {\bf p}_h) \in X_h^{\star}$ satisfy
  \cref{eq:discrete_problem} and $f \in \sbr{L^2(\Omega)}^{d}$. The
  velocity field $u_{h}$ is the sum of the local solutions (from
  \cref{def:localsolver_alt}) $l(\bar{u}_h, {\bf p}_h) :=
  v_{h}^{L}(\bar{u}_h, p_h, \bar{p}_h, 0)$ and $u_{h}^{f} :=
  v_{h}^{L}(0, 0, 0, f)$:
  \begin{equation}
    \label{eq:u_U_p_P_alt}
    u_h = u_h^f + l(\bar{u}_h, {\bf p}_h).
  \end{equation}
  Furthermore, $(\bar{u}_h, {\bf p}_h) \in \bar{V}_h \times Q_h^{\star}$
  satisfies
  \begin{equation}
    \label{eq:global_problem_alt}
    \mathcal{B}_h\del{ \del{\bar{u}_h, {\bf p}_h, }, \del{\bar{w}_h, {\bf q}_h}}
    = \mathcal{L}_h\del{ (\bar{w}_h, {\bf q}_h) } \quad
    \forall \del{\bar{w}_h, {\bf q}_h} \in \bar{V}_h \times Q_h^{\star},
  \end{equation}
  where
  \begin{multline}
    \label{eq:def_B_L_B_alt}
    \mathcal{B}_h\del{ \del{\bar{v}_h, {\bf r}_h}, \del{\bar{w}_h, {\bf q}_h}}
    :=
    a_h\del{\del{ l(\bar{v}_{h}, \vec{r}_{h}), \bar{v}_h}, \del{l(\bar{w}_{h}, \vec{q}_{h}), \bar{w}_h}}
    \\
    + b_h\del{\vec{r}_h, \del{l(\bar{w}_{h}, \vec{q}_{h}), \bar{w}_h}}
    + b_h\del{\vec{q}_h, \del{l(\bar{v}_{h}, \vec{r}_{h}), \bar{v}_h}}
\end{multline}
  and
  \begin{equation}
    \label{eq:def_B_L_L_alt}
    \mathcal{L}_h( (\bar{w}_h, {\bf q}_h) ) := (l(\bar{w}_{h}, \vec{q}_{h}), f)_{\mathcal{T}},
  \end{equation}
  where $l(\bar{v}_{h}, \vec{r}_{h}) := l(\bar{v}_{h}) +
  l(\vec{r}_{h})$, and $l(\bar{v}_{h}) := v_{h}^{L}(\bar{v}_{h}, 0, 0,
  0)$ and $l(\vec{r}_{h}) := v_{h}^{L}(0, r_{h}, \bar{r}_{h}, 0)$ (by
  \cref{def:localsolver_alt}).
\end{lemma}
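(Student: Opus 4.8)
The plan is to prove the two claims separately, both by inserting carefully chosen test functions into the full discrete system \cref{eq:discrete_problem} and exploiting that the cell-velocity equation decouples element-by-element. The decomposition \cref{eq:u_U_p_P_alt} is obtained by localizing the velocity test function, and the reduced equation \cref{eq:global_problem_alt} is obtained by testing with velocities that are themselves outputs of the local solver.

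For the decomposition, I would first test \cref{eq:discrete_problem_a} with ${\bf v}_h = (v_h, 0)$, where $v_h \in V(K) = \sbr{P_k(K)}^d$ is supported on a single cell $K$ and the facet component vanishes. Since the facet contributions in $a_h$ drop out when $\bar{v}_h = 0$, and both $b_h({\bf p}_h, (v_h,0))$ and the right-hand side $\del{v_h, f}_{\mathcal T}$ are cell-local, this choice reduces \cref{eq:discrete_problem_a} on $K$ to exactly $a_K(u_h, v_h) = L_K(v_h)$ with local data $(\bar{m}_h, r_h, \bar{r}_h, s) = (\bar{u}_h, p_h, \bar{p}_h, f)$; the algebra is simply a matter of grouping the $\del{\nabla u_h, \nabla v_h}_K$, the $\langle \partial_n \cdot, \cdot\rangle_{\partial K}$, and the $\alpha h_K^{-1}\langle \cdot, \cdot\rangle_{\partial K}$ terms into $a_K$ and reading off the rest as $L_K$. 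Because $a_K$ is coercive on $V(K)$ for $\alpha$ sufficiently large (a consequence of \cref{eq:stab_bound_ah}), problem \cref{eqn:local_problem} is uniquely solvable, so $u_h|_K = v_h^L(\bar{u}_h, p_h, \bar{p}_h, f)$. Linearity of $L_K$, hence of the solution map, in its data then gives $u_h = v_h^L(\bar{u}_h, p_h, \bar{p}_h, 0) + v_h^L(0,0,0,f) = l(\bar{u}_h, {\bf p}_h) + u_h^f$, which is \cref{eq:u_U_p_P_alt}.

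For the reduced equation I would test \cref{eq:discrete_problem_a} with the admissible pair ${\bf V}_h := (l(\bar{w}_h, {\bf q}_h), \bar{w}_h) \in V_h^{\star}$ and test \cref{eq:discrete_problem_b} with ${\bf q}_h$. Substituting $u_h = l(\bar{u}_h, {\bf p}_h) + u_h^f$ into \cref{eq:discrete_problem_a} and using bilinearity splits $a_h({\bf u}_h, {\bf V}_h)$ into $a_h((l(\bar{u}_h, {\bf p}_h), \bar{u}_h), {\bf V}_h)$ plus $a_h((u_h^f, 0), {\bf V}_h)$; the first of these is precisely the $a_h$-term of $\mathcal{B}_h$, and $b_h({\bf p}_h, {\bf V}_h)$ is its first $b_h$-term. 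Substituting the same decomposition into \cref{eq:discrete_problem_b} and using that $b_h$ does not involve the facet velocity yields the remaining $b_h$-term of $\mathcal{B}_h$, namely $b_h({\bf q}_h, (l(\bar{u}_h, {\bf p}_h), \bar{u}_h)) = -b_h({\bf q}_h, (u_h^f, 0))$. Adding the two tested equations therefore produces $\mathcal{B}_h((\bar{u}_h, {\bf p}_h), (\bar{w}_h, {\bf q}_h))$ on the left and $\del{l(\bar{w}_h, {\bf q}_h), f}_{\mathcal T} = \mathcal{L}_h$ on the right, up to the single leftover quantity $a_h((u_h^f, 0), {\bf V}_h) + b_h({\bf q}_h, (u_h^f, 0))$.

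The crux, and the step I expect to demand the most care, is showing this leftover vanishes. Writing $V_h^{\rm test} := l(\bar{w}_h, {\bf q}_h) = v_h^L(\bar{w}_h, q_h, \bar{q}_h, 0)$, I would expand $a_h((u_h^f, 0), {\bf V}_h)$ cellwise, collect the purely $u_h^f$/$V_h^{\rm test}$ volume and boundary terms into the symmetric local form $a_K(u_h^f, V_h^{\rm test})$, and retain the two residual facet terms carrying $\bar{w}_h$. Using the symmetry $a_K(u_h^f, V_h^{\rm test}) = a_K(V_h^{\rm test}, u_h^f)$ together with the local identity \cref{eqn:local_problem} satisfied by $V_h^{\rm test}$ (tested against $u_h^f \in V(K)$, with data $(\bar{w}_h, q_h, \bar{q}_h, 0)$), the $\bar{w}_h$-terms cancel exactly and one is left with $a_h((u_h^f, 0), {\bf V}_h) = \sum_{K}\sbr{\del{\nabla\cdot u_h^f, q_h}_K - \langle u_h^f \cdot n, \bar{q}_h\rangle_{\partial K}} = -b_h({\bf q}_h, (u_h^f, 0))$. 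Hence the leftover is zero and \cref{eq:global_problem_alt} follows. Everything outside this cancellation is bilinearity together with the defining relation of the local solver, so this boundary-term bookkeeping is the only genuinely delicate part.
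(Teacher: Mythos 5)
Your proposal is correct and follows essentially the same route as the paper: the decomposition \cref{eq:u_U_p_P_alt} via cell-supported test functions, linearity and unique solvability of the local solver, and then the symmetry of $a_K$ combined with the defining relation \cref{eqn:local_problem} for $l(\bar{w}_h, \vec{q}_h)$ tested against $u_h^f$ --- your leftover-cancellation step is precisely the paper's identity \cref{eq:simplified_rhs}. The only difference is bookkeeping: you test \cref{eq:discrete_problem_a} once with the composite pair $(l(\bar{w}_h,\vec{q}_h),\bar{w}_h)$ and isolate a single vanishing term, whereas the paper tests with $(0,\bar{w}_h)$ and assembles the separately derived local identities \cref{eq:simp_uh0,eq:simplified_rhs,eq:divergences_uf}; the two organizations are algebraically equivalent.
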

\begin{proof}
  \Cref{eq:u_U_p_P_alt} follows by \cref{eq:discrete_problem_a},
  linearity of the problem and \cref{def:localsolver_alt} (local
  solver).

  We next prove \cref{eq:global_problem_alt}. Note that $l(\bar{w}_{h},
  \vec{q}_{h})$, restricted to the cell $K$, satisfies a form of the
  local problem in \cref{eqn:local_problem}, and $u_h^f$, restricted to
  the cell $K$, satisfies a form of the local problem. Combining the two
  local problems (the former with $u_{h}^{f}$ in the test function slot,
  and the latter with $l(\bar{w}_{h}, \vec{q}_{h})$ in the test function
  slot), and summing over the cells in the triangulation,
  \begin{multline}
    \label{eq:simplified_rhs}
    - \left\langle \partial_n u_h^f, \bar{w}_h \right\rangle_{\partial \mathcal{T}}
    + \alpha h_K^{-1} \left\langle u_h^f, \bar{w}_h \right\rangle_{\partial \mathcal{T}}
\\
    =
    \del{f, l(\bar{w}_{h}, \vec{q}_{h})}_{\mathcal{T}}
    - \del{\nabla\cdot u_h^f, q_h}_{\mathcal{T}}
    + \left \langle u_h^f \cdot n, \bar{q}_h \right\rangle_{\partial \mathcal{T}}.
  \end{multline}
  The lifted function $l(\bar{u}_h, {\bf p}_h)$, restricted to the cell
  $K$, satisfies the local problem, which with $l(\bar{w}_{h},
  \vec{q}_{h})$ in the test function slot reads:
  \begin{multline}
    \label{eq:simp_uh0}
    \del{\nabla l(\bar{u}_h, {\bf p}_h), \nabla l(\bar{w}_{h}, \vec{q}_{h})}_K
    - \left\langle \partial_n l(\bar{u}_h, {\bf p}_h), l(\bar{w}_{h}, \vec{q}_{h}) \right\rangle_{\partial K}
    \\
    - \left\langle \partial_n l(\bar{w}_{h}, \vec{q}_{h}), l(\bar{u}_h, {\bf p}_h) - \bar{u}_h \right\rangle_{\partial K}
    + \alpha h_K^{-1} \left\langle l(\bar{u}_h, {\bf p}_h) - \bar{u}_h,  l(\bar{w}_{h}, \vec{q}_{h}) \right\rangle_{\partial K}
\\
    - \del{\nabla \cdot l(\bar{w}_{h}, \vec{q}_{h}), p_h}_K
    + \left \langle l(\bar{w}_{h}, \vec{q}_{h}) \cdot n, \bar{p}_h \right\rangle_{\partial K} = 0.
  \end{multline}
  Substituting \cref{eq:u_U_p_P_alt} into \cref{eq:discrete_problem},
  with ${\bf v}_h = (0, \bar{w}_h)$ in \cref{eq:discrete_problem_a},
  \begin{subequations}
    \begin{align}
      \label{eq:fluxes_uf}
      \left\langle \partial_n l(\bar{u}_h, {\bf p}_h), \bar{w}_h \right\rangle_{\partial\mathcal{T}}
      - \alpha h_K^{-1} \left\langle l(\bar{u}_h, {\bf p}_h) - \bar{u}_h, \bar{w}_h \right\rangle_{\partial\mathcal{T}}
      &=
        - \left\langle \partial_n u_h^f, \bar{w}_h \right\rangle_{\partial\mathcal{T}}
        + \alpha  h_K^{-1} \left\langle u^f_h, \bar{w}_h \right\rangle_{\partial\mathcal{T}},
      \\
      \label{eq:divergences_uf}
      \del{q_h, \nabla \cdot l(\bar{u}_h, {\bf p}_h)}_{\mathcal{T}}
      - \left\langle l(\bar{u}_h, {\bf p}_h) \cdot n, \bar{q}_h \right\rangle_{\partial \mathcal{T}}
      &=
        - \del{q_h, \nabla \cdot u_h^f}_{\mathcal{T}}
        + \left\langle u_h^f \cdot n, \bar{q}_h \right\rangle_{\partial \mathcal{T}}.
    \end{align}
  \end{subequations}
  Summing \cref{eq:simp_uh0} over all cells and adding to the left-hand
  side of \cref{eq:fluxes_uf}, and using \cref{eq:simplified_rhs} to
  replace the right-hand side of \cref{eq:fluxes_uf}, we have:
  \begin{multline}
    \del{\nabla l(\bar{u}_h, {\bf p}_h), \nabla l(\bar{w}_{h}, \vec{q}_{h})}_{\mathcal{T}}
    - \left\langle \partial_n l(\bar{u}_h, {\bf p}_h), l(\bar{w}_{h}, \vec{q}_{h}) - \bar{w}_h \right\rangle_{\partial \mathcal{T}}
    \\
    - \left\langle \partial_n l(\bar{w}_{h}, \vec{q}_{h}), l(\bar{u}_h, {\bf p}_h) - \bar{u}_h \right\rangle_{\partial \mathcal{T}}
    \\
    + \alpha h_K^{-1} \left\langle l(\bar{u}_h, {\bf p}_h) - \bar{u}_h,  l(\bar{w}_{h}, \vec{q}_{h}) - \bar{w}_h\right\rangle_{\partial \mathcal{T}}
    - \del{\nabla \cdot l(\bar{w}_{h}, \vec{q}_{h}), p_h}_{\mathcal{T}}
    \\
    + \left \langle l(\bar{w}_{h}, \vec{q}_{h}) \cdot n, \bar{p}_h \right\rangle_{\partial \mathcal{T}}
    + \del{\nabla\cdot u_h^f, q_h}_{\mathcal{T}}
    - \left \langle u_h^f \cdot n, \bar{q}_h \right\rangle_{\partial \mathcal{T}}
    =
    \del{f, l(\bar{w}_{h}, \vec{q}_{h})}_{\mathcal{T}}.
  \end{multline}
  \Cref{eq:global_problem_alt} follows after using
  \cref{eq:divergences_uf}. \qed
\end{proof}

By \cref{def:localsolver_alt}, consider $l(\bar{v}_{h}) :=
v_{h}^{L}(\bar{v}_h, 0, 0, 0)$ and $l(\vec{r}_{h}) := v_{h}^{L}(0,
r_h, \bar{r}_h, 0)$. By linearity, it follows that $l(\bar{v}_{h},
\vec{r}_{h}) = l(\Bar{v}_{h}) + l(\vec{r}_{h})$. Note also
\begin{multline}
  \label{eq:a_in_bits}
  a_h((l(\bar{v}_{h}, \vec{r}_{h}), \bar{v}_h), (l(\bar{w}_{h}, \vec{q}_{h}), \bar{w}_h))
  =
  \underbrace{a_h((l(\bar{v}_h), \bar{v}_h), (l(\bar{w}_h),
    \bar{w}_h))}_{\bar{a}_{h}(\bar{v}_{h}, \bar{w}_{h})}
  \\
  +
  a_h((l(\bar{v}_h), \bar{v}_h), (l(\vec{q}_h), \bar{w}_h))
  +
  a_h((l(\vec{r}_{h}), \bar{v}_h), (l(\bar{w}_{h}, \vec{q}_{h}), \bar{w}_h)),
\end{multline}
where $l(\bar{w}_{h}) := v_{h}^{L}(\bar{w}_h, 0, 0, 0)$,
$l(\vec{q}_{h}) := v_{h}^{L}(0, q_h, \bar{q}_h, 0)$ and
$l(\bar{w}_{h}, \vec{q}_{h}) := l(\bar{w}_{h}) + l(\vec{q}_{h})$. The
following result for $\bar{a}_h(\cdot, \cdot)$ will be useful in
analyzing preconditioners.
\begin{lemma}[Equivalence of norm induced by $\bar{a}_h(\cdot, \cdot)$]
  \label{lem:specequiv_bara}
  There exist positive constants $C_1$ and $C_2$ independent of $h_K$
  such that
  \begin{equation}
    \label{eq:specequiv_bara}
    C_1 \tnorm{\bar{w}_h}_h^2
    \le
    \bar{a}_h(\bar{w}_h, \bar{w}_h)
    \le
    C_2 \tnorm{\bar{w}_h}_h^2 \quad \forall \bar{w}_{h} \in \bar{V}_{h}.
  \end{equation}
\end{lemma}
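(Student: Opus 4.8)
The plan is to reduce the claimed equivalence to local, scaling-based estimates by exploiting the variational characterization of the local solver. First I would write $\bar{a}_h(\bar w_h, \bar w_h) = a_h((l(\bar w_h), \bar w_h),(l(\bar w_h),\bar w_h)) = \sum_{K\in\mathcal{T}}\tilde a_K((l(\bar w_h), \bar w_h),(l(\bar w_h),\bar w_h))$, where $\tilde a_K$ denotes the (symmetric) contribution of $a_h$ from cell $K$ and, by \cref{def:localsolver_alt}, $l(\bar w_h)|_K$ is the Galerkin solution satisfying $\tilde a_K((l(\bar w_h), \bar w_h),(w_h,0)) = 0$ for all $w_h \in V(K)$. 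For $\alpha$ sufficiently large the local form is coercive in its interior argument, so $l(\bar w_h)|_K$ is precisely the minimizer of $v_h \mapsto \tilde a_K((v_h, \bar w_h),(v_h, \bar w_h))$ over $V(K)$. The whole proof then amounts to sandwiching this minimized local energy between two multiples of $h_K^{-1}\norm{\bar w_h - m_K(\bar w_h)}_{\partial K}^2$ and summing over $K$.

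For the upper bound I would use that constants belong to $V(K)$: for a constant $c$ one has $\tilde a_K((c,\bar w_h),(c,\bar w_h)) = \alpha h_K^{-1}\norm{\bar w_h - c}_{\partial K}^2$, since $\nabla c = 0$ and $\partial_n c = 0$ kill the gradient and consistency terms. Testing the minimizing property with $c = m_K(\bar w_h)$ gives $\tilde a_K((l(\bar w_h),\bar w_h),(l(\bar w_h),\bar w_h)) \le \alpha h_K^{-1}\norm{\bar w_h - m_K(\bar w_h)}_{\partial K}^2$, and summing over cells yields $\bar a_h(\bar w_h,\bar w_h) \le \alpha\tnorm{\bar w_h}_h^2$, i.e. $C_2 = \alpha$ with no further work.

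For the lower bound I would first invoke coercivity (\cref{eq:norm_equiv_ah}) to get $\bar a_h(\bar w_h, \bar w_h) \ge c_a^s \tnorm{(l(\bar w_h), \bar w_h)}_v^2$, and then prove the cell-wise estimate $h_K^{-1}\norm{\bar w_h - m_K(\bar w_h)}_{\partial K}^2 \lesssim \norm{\nabla l(\bar w_h)}_K^2 + \alpha h_K^{-1}\norm{l(\bar w_h) - \bar w_h}_{\partial K}^2$. Writing $\bar w_h - m_K(\bar w_h) = (\bar w_h - l) - m_K(\bar w_h - l) + (l - m_K(l))$, bounding the constant mean by $\norm{m_K(\bar w_h - l)}_{\partial K} \le \norm{\bar w_h - l}_{\partial K}$, and using a trace--Poincar\'e inequality $\norm{l - m_K(l)}_{\partial K}^2 \le C h_K \norm{\nabla l}_K^2$ controls the three pieces by the penalty and gradient terms, respectively. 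Summing over cells and combining with coercivity gives $\tnorm{\bar w_h}_h^2 \le C \tnorm{(l(\bar w_h),\bar w_h)}_v^2 \le (C/c_a^s)\,\bar a_h(\bar w_h, \bar w_h)$, i.e. $C_1 = c_a^s/C$.

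I expect the trace--Poincar\'e step to be the main obstacle. Obtaining an $h_K$-independent constant requires a scaling argument to a reference cell together with mesh shape-regularity, and some care is needed because the seminorm $\norm{\,\cdot - m_K(\cdot)\,}_{\partial K}$ has a kernel strictly larger than the constants (interior bubble modes vanish on $\partial K$). The inequality nonetheless holds in the required direction, because the kernel of $\norm{\nabla\,\cdot\,}_K$ --- the constants --- is contained in that of the boundary seminorm, so a finite-dimensional seminorm-domination argument on the reference cell applies. The upper bound, by contrast, is essentially free once the energy-minimizing property of the local solver is observed.
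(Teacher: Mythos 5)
Your proposal is correct, and for the harder direction it takes a genuinely different route from the paper. The lower bound is essentially the paper's own argument for the lower bound in \cref{eq:toprove}: the paper splits $\bar{w}_h - m_K(\bar{w}_h)$ via $l(\bar{w}_h)$ and the cell average $M_K(l(\bar{w}_h))$, using the mean estimate in \cref{eq:w_avg} and a trace plus scaled Friedrichs inequality in \cref{eq:trace_fried}; your decomposition $\bar{w}_h - m_K(\bar{w}_h) = (\bar{w}_h - l) - m_K(\bar{w}_h - l) + (l - m_K(l))$ with the projection bound $\norm{m_K(\bar{w}_h - l)}_{\partial K} \le \norm{\bar{w}_h - l}_{\partial K}$ and the trace--Poincar\'e inequality is only a cosmetic variant, and your kernel concern is resolved exactly as you say: the needed inclusion is $\ker\norm{\nabla \cdot}_K \subseteq \ker\norm{\cdot - m_K(\cdot)}_{\partial K}$, and in fact $\norm{v - m_K(v)}_{\partial K}^2 \le C h_K \norm{\nabla v}_K^2$ holds for all of $H^1(K)$ (trace inequality plus Poincar\'e with vanishing boundary mean), so no finite-dimensional argument is even required. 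The genuine departure is the upper bound. The paper proves the norm bound $\tnorm{(l(\bar{w}_h), \bar{w}_h)}_v \le c_2 \tnorm{\bar{w}_h}_h$ by testing the local problem with $w_h = l(\bar{w}_h) - m_K(\bar{w}_h)$ and working through the identities \cref{eq:vhuequation2}--\cref{eq:vhuequation4} with coercivity, Cauchy--Schwarz and a trace inequality, then invokes boundedness of $a_h$ with constant $c_a^b$. You instead note that the orthogonality relation satisfied by $l(\bar{w}_h)|_K$ --- which is indeed exactly \cref{eqn:local_problem} with $s = 0$, $r_h = \bar{r}_h = 0$, $\bar{m}_h = \bar{w}_h$, rewritten as vanishing of the cell-$K$ contribution to $a_h$ against $(w_h, 0)$ --- characterizes $l(\bar{w}_h)|_K$ as the minimizer of the local energy, and comparison with the constant $m_K(\bar{w}_h) \in V(K)$ (for which only the penalty term survives) gives $\bar{a}_h(\bar{w}_h, \bar{w}_h) \le \alpha \tnorm{\bar{w}_h}_h^2$ in one line, with the explicit constant $C_2 = \alpha$ and no appeal to $c_a^b$ or trace estimates. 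The one point you should make explicit is cell-wise coercivity of the quadratic part (needed for strict convexity, hence for stationarity to imply minimality); this is available either from the element-wise coercivity proofs cited for \cref{eq:stab_bound_ah}, or directly from the global bound by extending $v_h$ by zero to all other cells with zero facet component, since $a_h$ and $\tnorm{\cdot}_v$ then decouple cell-wise. What the paper's longer computation buys is the two-sided norm equivalence \cref{eq:toprove} itself, which cleanly separates the role of the local lifting from the coercivity/boundedness constants; but for the lemma as stated your energy-minimization argument is shorter, and combined with coercivity it recovers \cref{eq:toprove} as well.
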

\begin{proof}
  From boundedness and coercivity of $a_h(\cdot, \cdot)$ (see
  \cref{eq:norm_equiv_ah}), for sufficiently large~$\alpha$,
  \begin{equation}
    c_a^s\tnorm{(l(\bar{w}_{h}), \bar{w}_h)}_v^2
    \le \bar{a}_h(\bar{w}_h, \bar{w}_h)
    \le c_a^b \tnorm{(l(\bar{w}_{h}), \bar{w}_h)}_v^2.
  \end{equation}
  We therefore need to demonstrate the equivalence
  \begin{equation}
    \label{eq:toprove}
    c_1 \tnorm{\bar{w}_h}_h \le \tnorm{(l(\bar{w}_{h}), \bar{w}_h)}_v
    \le c_2 \tnorm{\bar{w}_h}_h.
  \end{equation}

  We first consider the lower bound in \cref{eq:toprove}. Note that
  \begin{equation}
    \label{eq:splittingnorm}
    \begin{split}
      h_K^{-1/2}\norm{\bar{w}_h - m_K(\bar{w}_h)}_{\partial K}
      &\le
      h_K^{-1/2}\norm{\bar{w}_h - l(\bar{w}_{h}) }_{\partial K}
      + h_K^{-1/2} \norm{l(\bar{w}_{h}) - m_K(\bar{w}_h)}_{\partial K}.
    \end{split}
  \end{equation}
  Defining
  \begin{equation}
    M_K(l(\bar{w}_{h})) := \frac{1}{|K|}\int_K l(\bar{w}_{h}) \dif s,
  \end{equation}
  then
  \begin{equation}
    \label{eq:w_avg}
    \begin{split}
      &h_K^{-1/2}\norm{l(\bar{w}_{h}) - m_K(\bar{w}_h)}_{\partial K}
      \\
      &\quad\le
      h_K^{-1/2}\norm{l(\bar{w}_{h}) - M_K(l(\bar{w}_{h}))}_{\partial K}
      + h_K^{-1/2}\norm{M_K(l(\bar{w}_{h})) - m_K(\bar{w}_h)}_{\partial K}
      \\
      &\quad=
      h_K^{-1/2}\norm{l(\bar{w}_{h}) - M_K(l(\bar{w}_{h}))}_{\partial K}
      + h_K^{-1/2}\norm{m_K(M_K(l(\bar{w}_{h})) - \bar{w}_h)}_{\partial K}
      \\
      &\quad\le
      h_K^{-1/2}\norm{l(\bar{w}_{h}) - M_K(l(\bar{w}_{h}))}_{\partial K}
      + h_K^{-1/2}\norm{M_K(l(\bar{w}_{h})) - \bar{w}_h}_{\partial K}
      \\
      &\quad\le
      2h_K^{-1/2}\norm{l(\bar{w}_{h}) - M_K(l(\bar{w}_{h}))}_{\partial K}
      + h_K^{-1/2}\norm{l(\bar{w}_{h}) - \bar{w}_h}_{\partial K},
    \end{split}
  \end{equation}
  where the second inequality is by~\cite[Eq.~(10.6.11)]{Brenner:book}.
  By a trace inequality and a (scaled) Friedrich's
  inequality~\cite[Lemma 4.3.14]{Brenner:book},
  \begin{equation}
    \label{eq:trace_fried}
    2h_K^{-1/2}\norm{l(\bar{w}_{h}) - M_K(l(\bar{w}_{h}))}_{\partial K}
    \le
    2h_K^{-1}\norm{l(\bar{w}_{h}) - M_{K}(l(\bar{w}_{h}))}
    \le
    c \norm{\nabla l(\bar{w}_{h})}_K.
  \end{equation}
  Combining \cref{eq:splittingnorm,eq:w_avg,eq:trace_fried},
  \begin{equation}
    h_K^{-1/2}\norm{\bar{w}_h - m_K(\bar{w}_h)}_{\partial K}
    \le
    c \del{ \norm{\nabla l(\bar{w}_{h})}_K + h_K^{-1/2} \norm{l(\bar{w}_{h})
    - \bar{w}_h}_{\partial K}}.
  \end{equation}
  The lower bound in \cref{eq:toprove} follows after squaring, applying
  Young's inequality, and summing over all cells.

  We next consider the upper bound in \cref{eq:toprove}. By definition
  of $l(\bar{w}_{h})$ and considering that $m_{K}(\bar{w}_{h})$ is
  constant on a cell,
  \begin{multline}
    \label{eq:vhuequation2}
    \del{\nabla l(\bar{w}_{h}), \nabla w_h}_K
    - \left\langle \partial_n l(\bar{w}_{h}), w_h \right\rangle_{\partial K}
    - \left\langle l(\bar{w}_{h}) - m_K(\bar{w}_h), \partial_n w_h \right\rangle_{\partial K}
\\
    + \alpha  h_K^{-1} \left\langle l(\bar{w}_{h}) - m_K(\bar{w}_h),  w_h \right\rangle_{\partial K}
    =
    - \left\langle \partial_n w_h, \bar{w}_h - m_K(\bar{w}_h) \right\rangle_{\partial K}
  \\
    + \alpha  h_K^{-1} \left\langle w_h, \bar{w}_h - m_K(\bar{w}_h) \right\rangle_{\partial K}
    \quad  \forall w_h \in V(K),
  \end{multline}
  Setting $w_h = l(\bar{w}_{h}) - m_K(\bar{w}_h)$, then
  \begin{multline}
    \label{eq:vhuequation3}
    \norm{\nabla l(\bar{w}_{h})}^2_K
    - 2\left\langle \partial_n l(\bar{w}_{h}), l(\bar{w}_{h}) - m_K(\bar{w}_h) \right\rangle_{\partial K}
    \\
    + \alpha  h_K^{-1} \left\langle l(\bar{w}_{h}) - m_K(\bar{w}_h),  l(\bar{w}_{h}) - m_K(\bar{w}_h) \right\rangle_{\partial K}
    \\
    =
    - \left\langle \partial_n l(\bar{w}_{h}), \bar{w}_h - m_K(\bar{w}_h) \right\rangle_{\partial K}
   \\
    + \alpha  h_K^{-1} \left\langle l(\bar{w}_{h}) - m_K(\bar{w}_h), \bar{w}_h - m_K(\bar{w}_h) \right\rangle_{\partial K},
  \end{multline}
  which can be manipulated into
  \begin{multline}
    \label{eq:vhuequation4}
    \norm{\nabla l(\bar{w}_{h})}^2_K
    + 2\left\langle \partial_n l(\bar{w}_{h}), \bar{w}_h - w_h^{\bar{w}}\right\rangle_{\partial K}
    + \alpha  h_K^{-1} \norm{l(\bar{w}_{h}) - \bar{w}_h}^2_{\partial K}
    \\
    =
    \left\langle \partial_n w_h^{\bar{w}}, \bar{w}_h - m_K(\bar{w}_h) \right\rangle_{\partial K}
    + \alpha h_K^{-1}\left\langle \bar{w}_h - m_K(\bar{w}_h), \bar{w}_h - l(\bar{w}_{h})\right\rangle_{\partial K}.
  \end{multline}
  Considering the left-hand side of \cref{eq:vhuequation4}, by
  coercivity of~$a_h(\cdot, \cdot)$,
  \begin{multline}
    \label{eq:normingvhumequiv}
    c_1\del{\norm{\nabla l(\bar{w}_{h})}_K^2
      + \alpha  h_K^{-1} \norm{l(\bar{w}_{h}) - \bar{w}_h}^2_{\partial K}}
    \\ \le
    \norm{\nabla l(\bar{w}_{h})}_K^2
    + 2\left\langle \partial_n l(\bar{w}_{h}), \bar{w}_h - l(\bar{w}_{h}) \right\rangle_{\partial K}
    + \alpha  h_K^{-1} \norm{l(\bar{w}_{h}) - \bar{w}_h}^2_{\partial K}.
  \end{multline}
  For the right-hand side of \cref{eq:vhuequation4}, by Cauchy--Schwarz
  and a trace inequality,
  \begin{equation}
    \label{eq:vhumrhsbound}
    \begin{split}
      &\left\langle \partial_n l(\bar{w}_{h}), \bar{w}_h - m_K(\bar{w}_h)\right\rangle_{\partial K}
      + \alpha h_K^{-1}\left\langle \bar{w}_h - m_K(\bar{w}_h), \bar{w}_h - l(\bar{w}_{h})\right\rangle_{\partial K}
      \\
      &\le
      h_K^{1/2}\norm{ \partial_n l(\bar{w}_{h})}_{\partial K} h_K^{-1/2} \norm{\bar{w}_h - m_K(\bar{w}_h)}_{\partial K}
      \\
       & \quad + \alpha^{1/2} h_K^{-1/2}\norm{\bar{w}_h - m_K(\bar{w}_h)}_{\partial K} \alpha^{1/2} h_K^{-1/2}\norm{l(\bar{w}_{h}) - \bar{w}_h}_{\partial K}
      \\
      & \le
      \norm{ \nabla l(\bar{w}_{h}) }_{ K} h_K^{-1/2} \norm{\bar{w}_h - m_K(\bar{w}_h)}_{\partial K}
      \\
      & \quad + \alpha^{1/2}h_K^{-1/2}\norm{\bar{w}_h - m_K(\bar{w}_h)}_{\partial K} \alpha^{1/2} h_K^{-1/2}\norm{l(\bar{w}_{h}) - \bar{w}_h}_{\partial K}
      \\
      & \le
      \del{\norm{ \nabla l(\bar{w}_{h}) }_{ K} +  \alpha^{1/2} h_K^{-1/2}\norm{l(\bar{w}_{h}) - \bar{w}_h}_{\partial K}}
      \del{\alpha^{1/2}h_K^{-1/2}\norm{\bar{w}_h - m_K(\bar{w}_h)}_{\partial K}}.
    \end{split}
  \end{equation}
  Combining \cref{eq:vhuequation4,eq:vhumrhsbound,eq:normingvhumequiv},
  \begin{equation}
    \norm{\nabla l(\bar{w}_{h})}_K
    + \alpha^{1/2} h_K^{-1/2} \norm{l(\bar{w}_{h}) - \bar{w}_h}_{\partial K}
    \le
    c \alpha^{1/2} h_K^{-1/2}\norm{\bar{w}_h - m_K(\bar{w}_h)}_{\partial K}.
  \end{equation}
  The upper bound in \cref{eq:toprove} follows after squaring,
  application of Young's inequality, and summing over all elements.
  \qed
\end{proof}

\section{Preconditioning}
\label{s:preconditioning}

We present now the main results, namely preconditioners for the
hybridized discontinuous Galerkin discretization of the Stokes
equations. We consider first a preconditioner for the full problem (no
static condensation), and then the system with the cell-wise velocity
degrees-of-freedom eliminated locally. As mentioned in the
introduction, we do not consider the case where the cell-wise pressure
degrees-of-freedom are also eliminated locally as this complicates
preconditioning and the reduction in size of the global systems
through eliminating the pressure is modest. Preconditioning a system
with both the velocity and pressure degrees-of-freedom eliminated
cell-wise is an interesting technical question.

\subsection{The full discrete Stokes problem}
\label{ss:discretesystem}

Let $u \in \mathbb{R}^{n_u}$ be the vector of discrete velocity with
respect to the basis for $V_h$, and $p \in N^{n_p} = \{q \in
\mathbb{R}^{n_p} | q \ne 1\}$ be the vector of the discrete pressure
with respect to the basis for $Q_h$. Furthermore, let $\bar{u} \in
\mathbb{R}^{\bar{n}_u}$ and $\bar{p} \in \mathbb{R}^{\bar{n}_p}$ be the
vectors of discrete velocity and pressure associated with the spaces
$\bar{V}_h$ and $\bar{Q}_h$, respectively. The discrete problem in
\cref{eq:discrete_problem} can be expressed as the system of linear
equations:
\begin{equation}
  \label{eq:block-matrix}
  \begin{bmatrix}
    A & B^T \\
    B & 0
  \end{bmatrix}
  \begin{bmatrix}
    U \\
    P
  \end{bmatrix}
  =
  \begin{bmatrix}
    L \\ 0
  \end{bmatrix},
  \quad
  \text{with}
  \quad
  U :=
  \begin{bmatrix}
    u \\ \bar{u}
  \end{bmatrix},
  \quad
  P :=
  \begin{bmatrix}
    p \\ \bar{p}
  \end{bmatrix},
  \quad
  L :=
  \begin{bmatrix}
    L_u \\ L_{\bar{u}}
  \end{bmatrix},
\end{equation}
and where $A$ and $B$ are the matrices obtained from the
discretization of the bilinear forms $a_h(\cdot, \cdot)$ and
$b_h(\cdot, \cdot)$, defined by \cref{eq:bilin_forms}. The matrices
$A$ and $B$ are block matrices:
\begin{equation}
  \label{eq:blocks}
  A :=
  \begin{bmatrix}
    A_{uu} & A_{\bar{u}u}^T \\
    A_{\bar{u}u} & A_{\bar{u}\bar{u}}
  \end{bmatrix}
  \qquad \mbox{and} \qquad
  B :=
  \begin{bmatrix}
    B_{pu} & 0 \\
    B_{\bar{p}u} & 0
  \end{bmatrix},
\end{equation}
where $A_{uu}$, $A_{\bar{u}u}$ and $A_{\bar{u} \bar{u}}$ are the
matrices obtained from the discretization of $a_h((\cdot, 0), (\cdot,
0))$, $a_h((\cdot, 0), (0, \cdot))$ and $a_h((0, \cdot), (0, \cdot))$,
respectively, and $B_{pu}$ and $B_{\bar{p}u}$ are the matrices obtained
from the discretization of $b_h((\cdot,0), (\cdot,0))$ and $b_h((0,
\cdot),(\cdot, 0))$, respectively.

We also introduce `cell'  and `facet' pressure mass matrices, $M$ and
$\bar{M}$, which are obtained from the discretization of
\begin{equation}
  \label{eq:pressureinnerproducts_alt}
  \del{q_h, p_h}_{\mathcal{T}}
    :=
  \sum_{K\in\mathcal{T}}\int_{K}q_hp_h \dif x
  \quad \mbox{and} \quad
  \langle \bar{q}_h, \bar{p}_h \rangle_p
  :=
  \sum_{K\in\mathcal{T}}h_K\int_{\partial K}\bar{q}_h\bar{p}_h \dif s,
\end{equation}
respectively, and note that
\begin{equation}
  \label{eq:def_M_barM}
  \norm{q_h}^2_{\Omega} = q^TMq, \qquad
  \norm{\bar{q}_h}^2_p = \bar{q}^T\bar{M}\bar{q}.
\end{equation}
Defining $\mathcal{M} := {\rm bdiag}(M, \bar{M})$ and $Q := [q^T \
\bar{q}^T]^T$,
\begin{equation}
  \label{eq:def_mathcal_M}
  \tnorm{{\bf q}_h}_p^2 = Q^T\mathcal{M}Q
  = q^TMq + \bar{q}^T\bar{M}\bar{q}.
\end{equation}

\begin{lemma}[Spectral equivalence between the mass matrix and the
  Schur complement]
  \label{lem:spec_equiv_Schur}
  Let $A$ and $B$ be the matrices given in \cref{eq:blocks} and let
  $\mathcal{M}$ be defined as in
  \cref{eq:def_mathcal_M}. Let $\beta_p$ and $c_b^b$ be the constants
  given in \cref{thm:stab_bh,eq:bound_bh}, respectively, and let $c_a^b$
  and $c_a^s$ be the constants given in
  \cref{eq:norm_equiv_ah}. The following holds:
  \begin{equation}
    \label{eq:spec_equiv_mathcalM_Schur}
    \frac{\beta_p}{\sqrt{c_a^b}} \le
    \frac{ Q^T BA^{-1}B^T Q}{Q^T \mathcal{M} Q} \le \frac{c_b^b}{\sqrt{c_a^s}}.
  \end{equation}
\end{lemma}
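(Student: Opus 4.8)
The plan is to read the Schur-complement Rayleigh quotient as the $a_h$-energy of a single discrete velocity and then bound that energy above and below by $\tnorm{{\bf q}_h}_p^2$, using the continuity and coercivity of $a_h$ together with the boundedness and inf-sup stability of $b_h$. Since $a_h(\cdot,\cdot)$ is symmetric, the matrix $A$ is symmetric positive definite by \cref{eq:norm_equiv_ah}, so for a pressure vector $Q$ representing ${\bf q}_h \in Q_h^{\star}$ I introduce the discrete velocity ${\bf w}_h \in V_h^{\star}$ whose coefficient vector is $w := A^{-1}B^TQ$. By construction $Aw = B^TQ$, which read back through the forms is the variational identity $a_h({\bf w}_h, {\bf v}_h) = b_h({\bf q}_h, {\bf v}_h)$ for all ${\bf v}_h \in V_h^{\star}$; that is, ${\bf w}_h$ is the $a_h$-Riesz representative of the functional ${\bf v}_h \mapsto b_h({\bf q}_h, {\bf v}_h)$. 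Consequently $Q^TBA^{-1}B^TQ = w^TAw = a_h({\bf w}_h, {\bf w}_h) = b_h({\bf q}_h, {\bf w}_h)$, while $Q^T\mathcal{M}Q = \tnorm{{\bf q}_h}_p^2$ by \cref{eq:def_mathcal_M}. Everything then reduces to sandwiching $a_h({\bf w}_h, {\bf w}_h)$ between multiples of $\tnorm{{\bf q}_h}_p^2$.

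For the upper bound I set ${\bf v}_h = {\bf w}_h$ in the variational identity and use boundedness of $b_h$ from \cref{eq:bound_bh}: $a_h({\bf w}_h, {\bf w}_h) = b_h({\bf q}_h, {\bf w}_h) \le c_b^b \tnorm{{\bf w}_h}_v \tnorm{{\bf q}_h}_p$. Coercivity (the lower inequality in \cref{eq:norm_equiv_ah}) gives $\tnorm{{\bf w}_h}_v \le (c_a^s)^{-1/2} a_h({\bf w}_h, {\bf w}_h)^{1/2}$; substituting and cancelling one common factor of $a_h({\bf w}_h, {\bf w}_h)^{1/2}$ leaves $a_h({\bf w}_h, {\bf w}_h)^{1/2} \le (c_b^b / \sqrt{c_a^s}) \tnorm{{\bf q}_h}_p$, so $c_b^b/\sqrt{c_a^s}$ bounds $\big(Q^TBA^{-1}B^TQ / Q^T\mathcal{M}Q\big)^{1/2}$, the upper estimate of \cref{eq:spec_equiv_mathcalM_Schur}.

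For the lower bound I invoke the inf-sup stability of \cref{thm:stab_bh}. Replacing $b_h({\bf q}_h, {\bf v}_h)$ by $a_h({\bf w}_h, {\bf v}_h)$ inside the supremum gives $\beta_p \tnorm{{\bf q}_h}_p \le \sup_{{\bf v}_h \in V_h^{\star}} a_h({\bf w}_h, {\bf v}_h) / \tnorm{{\bf v}_h}_v$. Because $a_h$ is symmetric positive definite I apply Cauchy--Schwarz in the $a_h$ inner product followed by the upper inequality in \cref{eq:norm_equiv_ah}, namely $a_h({\bf w}_h, {\bf v}_h) \le a_h({\bf w}_h, {\bf w}_h)^{1/2} a_h({\bf v}_h, {\bf v}_h)^{1/2} \le \sqrt{c_a^b}\, a_h({\bf w}_h, {\bf w}_h)^{1/2} \tnorm{{\bf v}_h}_v$, so the supremum is at most $\sqrt{c_a^b}\, a_h({\bf w}_h, {\bf w}_h)^{1/2}$. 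Combining with the inf-sup inequality yields $(\beta_p / \sqrt{c_a^b}) \tnorm{{\bf q}_h}_p \le a_h({\bf w}_h, {\bf w}_h)^{1/2}$, the matching lower bound on the same square root, so the two one-sided estimates together furnish the spectral equivalence of \cref{lem:spec_equiv_Schur}.

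I expect the only genuinely delicate step to be the identification in the first paragraph: writing $Q^TBA^{-1}B^TQ = a_h({\bf w}_h, {\bf w}_h)$ hinges on the symmetry of $A$ and on correctly translating the algebraic relation $Aw = B^TQ$ back into the variational identity $a_h({\bf w}_h, {\bf v}_h) = b_h({\bf q}_h, {\bf v}_h)$. Once ${\bf w}_h$ is in hand, both bounds are mechanical applications of results already established, with the Cauchy--Schwarz step for the energy inner product the only place symmetry of $a_h$ is used a second time. Notably, the block structure of $A$ and $B$ in \cref{eq:blocks} plays no role here; all that is needed is that they assemble $a_h$ on $V_h^{\star} \times V_h^{\star}$ and $b_h$ on $Q_h^{\star} \times V_h^{\star}$.
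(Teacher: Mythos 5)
Your proof is correct, and it rests on exactly the same three ingredients as the paper's: the inf-sup condition of \cref{thm:stab_bh}, the boundedness of $b_h$ in \cref{eq:bound_bh}, and the norm equivalence \cref{eq:norm_equiv_ah}. The one real difference is presentational: where the paper phrases the lower bound as a min--max over pressure and velocity vectors, \cref{eq:stab_bh_ah_matrix}, and then cites \citet[Section~3]{Pestana:2015} for the passage to the Schur-complement Rayleigh quotient, you construct the maximizer explicitly as the $a_h$-Riesz representative $w = A^{-1}B^TQ$ of ${\bf v}_h \mapsto b_h({\bf q}_h, {\bf v}_h)$ and run both bounds through the identity $Q^TBA^{-1}B^TQ = a_h({\bf w}_h, {\bf w}_h) = b_h({\bf q}_h, {\bf w}_h)$; this is precisely the optimizer hidden in the cited identity $\max_{V \ne 0}\, (Q^TBV)^2/(V^TAV) = Q^TBA^{-1}B^TQ$, so your argument is a self-contained version of the paper's, at no extra cost. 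One caveat, which you inherit from the paper rather than introduce: what your argument establishes (and, strictly, what the paper's min--max argument establishes too) is $\beta_p/\sqrt{c_a^b} \le \big(Q^TBA^{-1}B^TQ / Q^T\mathcal{M}Q\big)^{1/2} \le c_b^b/\sqrt{c_a^s}$, i.e.\ the stated constants bound the \emph{square root} of the Rayleigh quotient, so the quotient itself is bounded by the squares $\beta_p^2/c_a^b$ and $(c_b^b)^2/c_a^s$ --- the form in which such bounds appear in \citet{Pestana:2015}. Since only $h$-independence of the constants matters for spectral equivalence and for \cref{thm:global_precon}, this is immaterial, but your closing identification of the square-root bound with \cref{eq:spec_equiv_mathcalM_Schur} as literally written glosses over the same mismatch that the paper's ``which is equivalent to'' step does.
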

\begin{proof}
  Stability of $b_h$ (see \cref{thm:stab_bh}) and equivalence of $a_h$
  with $\tnorm{\cdot}_v$ in \cref{eq:norm_equiv_ah} imply
  \begin{equation}
    \label{eq:stab_bh_ah}
    \frac{\beta_p}{\sqrt{c_a^b}} \le \sup_{{\bf v}_h \in V_h^{\star}}
    \frac{ b_h({\bf q}_h, {\bf v}_h) }{a_h({\bf v}_h, {\bf v}_h)^{1/2}\tnorm{{\bf q}_h}_{p}}.
  \end{equation}
  Letting $V = \sbr{v^T \ \bar{v}^T}^T$, with $v \in \mathbb{R}^{n_u}$ and
  $\bar{v} \in \mathbb{R}^{\bar{n}_u}$ and $Q = \sbr{q^T \ \bar{q}^T}^T$,
  with $q \in N^{n_p}$ and $\bar{q} \in \mathbb{R}^{\bar{n}_q}$, we
  can express \cref{eq:stab_bh_ah} in matrix form:
  \begin{equation}
    \label{eq:stab_bh_ah_matrix}
    \frac{\beta_p}{\sqrt{c_a^b}} \le \min_{Q}\max_{V \ne 0}
    \frac{ Q^TBV }
    {\left(V^TAV\right)^{1/2}\left(Q^T\mathcal{M}Q\right)^{1/2}},
  \end{equation}
  which is equivalent to
  \begin{equation}
    \label{eq:stab_bh_mm}
    \frac{\beta_p}{\sqrt{c_a^b}} \le \min_{Q}
    \frac{ Q^TBA^{-1}B^TQ }{ Q^T\mathcal{M}Q },
  \end{equation}
  (see \citep[Section~3]{Pestana:2015}), proving the lower bound in
  \cref{eq:spec_equiv_mathcalM_Schur}.

  For the upper bound, from \cref{eq:bound_bh,eq:norm_equiv_ah} note
  that:
  \begin{equation}
    \label{eq:upperboundspecequivBVQ}
    \envert{b_h({\bf q}_h, {\bf v}_h)}
    \le c_b^b \tnorm{{\bf v}_h}_v\tnorm{{\bf q}_h}_p
    \le \frac{c_b^b}{\sqrt{c_a^s}} a_h({\bf v}_h, {\bf v}_h)^{1/2} \tnorm{{\bf q}_h}_p.
  \end{equation}
  The result follows after dividing both sides of
  \cref{eq:upperboundspecequivBVQ} by $\bar{a}_h({\bf v}_h, {\bf
  v}_h)^{1/2} \tnorm{{\bf q}_h}_{p}$ and expressing in matrix form. \qed
\end{proof}

\Cref{lem:spec_equiv_Schur} can be used to formulate a preconditioner
for the discrete problem in \cref{eq:block-matrix}.
\begin{lemma}[An optimal preconditioner for the full discrete Stokes
problem]
  \label{thm:global_precon}
  Let $A$ and $B$ be the matrices given in
  \cref{eq:blocks} and let $\mathcal{M}$ be defined as in
  \cref{eq:def_mathcal_M}. Let $R$ be an operator that is spectrally
  equivalent to $A$. For the preconditioned system
  \begin{equation}
    \label{eq:precon-block-matrix}
    \mathbb{P}^{-1}\mathbb{A}\mathbb{U} = \mathbb{P}^{-1}\mathbb{F}
    \qquad \leftrightarrow \qquad
    \begin{bmatrix}
      R & 0 \\ 0 & \mathcal{M}
    \end{bmatrix}^{-1}
    \begin{bmatrix}
      A & B^T \\
      B & 0
    \end{bmatrix}
    \begin{bmatrix}
      U \\
      P
    \end{bmatrix}
    =
    \begin{bmatrix}
      R & 0 \\ 0 & \mathcal{M}
    \end{bmatrix}^{-1}
    \begin{bmatrix}
      L \\ 0
    \end{bmatrix},
  \end{equation}
  there exist positive constants $C_1, C_2, C_3, C_4$, independent of
  $h$, such that the negative and positive eigenvalues of
  $\mathbb{P}^{-1}\mathbb{A}$ satisfy $\lambda \in [-C_1,
  -C_2]$ and $\lambda\in[C_3, C_4]$, respectively.
\end{lemma}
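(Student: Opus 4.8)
The plan is to pass to the generalized eigenvalue problem $\mathbb{A}w = \lambda\mathbb{P}w$ with $w = [x^T\ y^T]^T$, where $x$ collects the velocity-type unknowns $U$ and $y$ the pressure-type unknowns $P$, and to bound its real spectrum directly. Three facts serve as input. First, since $a_h$ is coercive (\cref{eq:norm_equiv_ah}) the block $A$ is SPD, and as $R$ is spectrally equivalent to $A$ there are $h$-independent constants $\theta_1,\theta_2>0$ with $\theta_1\,x^TRx\le x^TAx\le\theta_2\,x^TRx$ for all $x$. Second, by \cref{lem:spec_equiv_Schur} the Schur complement $S := BA^{-1}B^T$ satisfies $\sigma_1\,Q^T\mathcal{M}Q\le Q^TSQ\le\sigma_2\,Q^T\mathcal{M}Q$ with $\sigma_1 = \beta_p/\sqrt{c_a^b}$ and $\sigma_2 = c_b^b/\sqrt{c_a^s}$, both $h$-independent; in particular $S$ is SPD on the (constant-free) pressure space, so $B^T$ is injective there. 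Third, coercivity together with the inf-sup condition (\cref{thm:stab_bh}) makes $\mathbb{A}$ nonsingular on that space, and since $\mathbb{P}$ is SPD the matrix $\mathbb{P}^{-1}\mathbb{A}$ is self-adjoint in the $\mathbb{P}$-inner product; hence its eigenvalues are real and nonzero, and split into a positive and a negative part. This is exactly the abstract setting of \citet{Pestana:2015}, which I would invoke for the interpretation.

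For the positive eigenvalues I would eliminate $y$. For $\lambda\neq0$ the second block row gives $y = \lambda^{-1}\mathcal{M}^{-1}Bx$; substituting into the first row and testing with $x$ yields the scalar relation $\lambda^2 r - \lambda a - t = 0$, where $r = x^TRx>0$, $a = x^TAx\ge0$ and $t = x^TB^T\mathcal{M}^{-1}Bx\ge0$. The only nontrivial estimate needed is $t\le\sigma_2\,a$, which follows because $\max_x t/a = \lambda_{\max}(\mathcal{M}^{-1}S)\le\sigma_2$ by \cref{lem:spec_equiv_Schur}. Solving the quadratic gives $\lambda_+ = (a+\sqrt{a^2+4rt})/(2r)$, and using $a/r\in[\theta_1,\theta_2]$ and $t\le\sigma_2 a$ yields $\lambda_+\ge a/r\ge\theta_1=:C_3$ and $\lambda_+\le\tfrac12(\theta_2+\sqrt{\theta_2^2+4\sigma_2\theta_2})=:C_4$.

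The negative eigenvalues are where the real work lies, and here the naive elimination of $y$ fails to produce a bound away from zero: the quantity $t = x^TB^T\mathcal{M}^{-1}Bx$ has no positive lower bound over all $x$ (it vanishes on $\ker B$), so it cannot by itself force $|\lambda|$ away from $0$. The fix is to eliminate $x$ instead. For $\lambda<0$ the matrix $A - \lambda R = A + |\lambda|R$ is SPD, so the first block row gives $x = -(A-\lambda R)^{-1}B^Ty$ and the second becomes $B(A-\lambda R)^{-1}B^Ty = |\lambda|\mathcal{M}y$ with $y\neq0$ (if $y=0$ then $\lambda$ would be a positive eigenvalue of $R^{-1}A$, a contradiction). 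Normalizing $y^T\mathcal{M}y=1$ and testing with $y$ gives $|\lambda| = y^TB(A-\lambda R)^{-1}B^Ty$. The equivalence $\theta_2^{-1}A\le R\le\theta_1^{-1}A$ yields the Loewner sandwich $\tfrac{\theta_1}{\theta_1+|\lambda|}A^{-1}\le(A-\lambda R)^{-1}\le\tfrac{\theta_2}{\theta_2+|\lambda|}A^{-1}$, which reintroduces the genuine Schur complement $S = BA^{-1}B^T$ to which \cref{lem:spec_equiv_Schur} applies. Combined with $\sigma_1\le y^TSy\le\sigma_2$ this gives $\tfrac{\theta_1}{\theta_1+|\lambda|}\sigma_1\le|\lambda|\le\tfrac{\theta_2}{\theta_2+|\lambda|}\sigma_2$.

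Finally I would solve these two scalar inequalities in $|\lambda|$. The upper one gives $|\lambda|^2+\theta_2|\lambda|-\theta_2\sigma_2\le0$, hence $|\lambda|\le\tfrac12(\sqrt{\theta_2^2+4\theta_2\sigma_2}-\theta_2)=:C_1$, and the lower one gives $|\lambda|^2+\theta_1|\lambda|-\theta_1\sigma_1\ge0$, hence $|\lambda|\ge\tfrac12(\sqrt{\theta_1^2+4\theta_1\sigma_1}-\theta_1)=:C_2>0$. All four constants depend only on $\theta_1,\theta_2,\sigma_1,\sigma_2$ and are therefore $h$-independent, which proves the claim. The main obstacle is precisely the lower bound $C_2$ on the magnitude of the negative eigenvalues: it is the one place where the argument must avoid testing against $x$ and instead use the $x$-elimination, so that the spectrally equivalent Schur complement, rather than the sign-indefinite $B^T\mathcal{M}^{-1}B$, controls the estimate.
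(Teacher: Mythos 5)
Your proof is correct, but it takes a genuinely different route from the paper. The paper's proof is essentially a one-line verification of hypotheses: it notes that $\mathcal{M}$ is spectrally equivalent to the Schur complement $BA^{-1}B^T$ by \cref{lem:spec_equiv_Schur}, that $B$ has full rank by the inf-sup condition of \cref{thm:stab_bh}, and that $A$ is symmetric positive definite, and then cites \citep[Theorem~5.2]{Pestana:2015} for the eigenvalue inclusion intervals. You instead re-derive that cited theorem from scratch: the quadratic relation $\lambda^2 r - \lambda a - t = 0$ with $t \le \sigma_2 a$ (which is the Loewner statement $B^T\mathcal{M}^{-1}B \le \sigma_2 A$, equivalent to the upper bound in \cref{lem:spec_equiv_Schur}) handles the positive spectrum, and the $x$-elimination with the sandwich $\tfrac{\theta_1}{\theta_1+|\lambda|}A^{-1} \le (A-\lambda R)^{-1} \le \tfrac{\theta_2}{\theta_2+|\lambda|}A^{-1}$ handles the negative spectrum; both steps check out, including the observation that $y \ne 0$ for negative eigenvalues and that eliminating $y$ cannot yield the lower bound $C_2$ because $t$ has no uniform lower bound relative to $r$ — this is precisely the structural point that the Rusten--Winther/Pestana-type analysis encodes, and your identification of it is the mathematically substantive part of the write-up. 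What each approach buys: the paper's proof is short and delegates the spectral analysis to established saddle-point theory; yours is self-contained and produces explicit constants $C_1,\dots,C_4$ in terms of $\theta_1,\theta_2,\sigma_1,\sigma_2$, making the $h$-independence transparent. One small point of hygiene, which you flag only in passing: all of your min/max and eigenvector arguments must be carried out on the constant-free pressure subspace (the paper's $q \in N^{n_p}$), since $S$ is singular on constants; this is the same caveat implicit in the paper's appeal to full rank of $B$, so it is a matter of stating the restriction, not a gap.
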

\begin{proof}
  By \cref{lem:spec_equiv_Schur}, $\mathcal{M}$ is spectrally equivalent
  to the negative Schur complement $BA^{-1}B^T$ of $\mathbb{A}$.
  Furthermore, since $B$ is full rank (by the discrete inf-sup condition
  \cref{thm:stab_bh}) and since $A$ is symmetric positive definite, the
  result follows by direct application
  of~\citep[Theorem~5.2]{Pestana:2015}. \qed
\end{proof}

\subsection{Preconditioners for the statically condensed Stokes problem}
\label{ss:discrete_sc_system}

The `full' system considered in \cref{ss:discretesystem} is not the
system we wish to solve in practice. We wish to eliminate locally the
cell-wise velocity degrees-of-freedom via static condensation and
precondition the resulting reduced system. We now consider the
elimination of $u$ in \cref{eq:precon-block-matrix} to obtain a linear
system only for $\bar{u}$, $p$ and~$\bar{p}$.

Separating the degrees-of-freedom associated with $V_h$ from those
associated with the Lagrange multipliers, $\bar{V}_h \times
Q_h^{\star}$, we write \cref{eq:block-matrix} as
\begin{equation}
  \label{eq:reorg-discsys}
  \begin{bmatrix}
    A_{uu} & \mathsf{B}^T \\
    \mathsf{B} & \mathsf{C}
  \end{bmatrix}
  \begin{bmatrix}
    u \\ \mathsf{U}
  \end{bmatrix}
  =
  \begin{bmatrix}
    L_u \\ \mathsf{L}
  \end{bmatrix},
  \quad \mbox{with} \quad
  \mathsf{U} :=
  \begin{bmatrix}
    \bar{u} \\ p \\ \bar{p}
  \end{bmatrix} \quad
  \mathsf{L} :=
  \begin{bmatrix}
    L_{\bar{u}} \\ 0 \\ 0
  \end{bmatrix},
\end{equation}
and where
\begin{equation}
  \label{eq:def_sfA_sfB}
  \mathsf{B} :=
  \begin{bmatrix}
    A_{\bar{u}u} \\ B_{pu} \\ B_{\bar{p}u}
  \end{bmatrix},
  \quad
  \mathsf{C} :=
  \begin{bmatrix}
    A_{\bar{u}\bar{u}} & 0 & 0 \\ 0 & 0 & 0 \\ 0 & 0 & 0
  \end{bmatrix}.
\end{equation}
Note that $A_{uu}$ is a block diagonal matrix (one block per cell).
Using $u = A_{uu}^{-1} \del{L_u - \mathsf{B}^T \mathsf{U}}$, we
eliminate $u$ from \cref{eq:reorg-discsys}. This results in a reduced
system for $\mathsf{U}$ only,
\begin{equation}
  \label{eq:system_S_comp}
  \begin{bmatrix}
    \bar{A} & \bar{B}^T \\
    \bar{B} & \bar{C}
  \end{bmatrix}
  \begin{bmatrix}
    \bar{u} \\ P
  \end{bmatrix}
  =
  \begin{bmatrix}
    \bar{L} \\ \bar{G}
  \end{bmatrix},
\end{equation}
where $\bar{A} = -A_{\bar{u}u}A_{uu}^{-1}A_{\bar{u}u}^T + A_{\bar{u}\bar{u}}$ and where
\begin{equation}
  \label{eq:system_S_matrices}
  \bar{B} =
  \begin{bmatrix}
    -B_{pu}A_{uu}^{-1}A_{\bar{u}u}^T \\
    -B_{\bar{p}u}A_{uu}^{-1}A_{\bar{u}u}^T
  \end{bmatrix},
  \quad
  \bar{C} =
  \begin{bmatrix}
    -B_{pu}A_{uu}^{-1}B_{pu}^T & \ -B_{pu}A_{uu}^{-1}B_{\bar{p}u}^T \\
    -B_{\bar{p}u}A_{uu}^{-1}B_{pu}^T & \ -B_{\bar{p}u}A_{uu}^{-1}B_{\bar{p}u}^T
  \end{bmatrix}.
\end{equation}
The Schur complement of the block matrix in~\cref{eq:system_S_comp} is
given by $\bar{S} = -\bar{B} \bar{A}^{-1} \bar{B}^T + \bar{C}$. It is
easy to show (by direct computation) that $\bar{S} = -BA^{-1} B^T$,
with $A$ and $B$ the matrices given in \cref{eq:blocks}. An immediate
consequence of \cref{lem:spec_equiv_Schur} therefore is the following
corollary.

\begin{corollary}[Spectral equivalence between the mass matrix and the
  Schur complement of the statically condensed linear system]
  \label{cor:spec_equiv_sc_Schur}
  Let $\bar{A}$, $\bar{B}$ and $\bar{C}$ be the matrices given in
  \cref{eq:system_S_comp} and let $\mathcal{M}$ be defined as in
  \cref{eq:def_mathcal_M}. Let $\beta_p$ and $c_b^b$ be the constants
  given in, \cref{thm:stab_bh,eq:bound_bh}, respectively, and let
  $c_a^b$ and $c_a^s$ be the constants given in
  \cref{eq:norm_equiv_ah}. The following holds:
  \begin{equation}
    \label{eq:spec_equiv_mathcalM_sc_Schur}
    \frac{\beta_p}{\sqrt{c_a^b}} \le
    \frac{ Q^T \del{\bar{B}\bar{A}^{-1}\bar{B}^T - \bar{C}} Q}{Q^T \mathcal{M} Q}
    \le \frac{c_b^b}{\sqrt{c_a^s}}.
  \end{equation}
\end{corollary}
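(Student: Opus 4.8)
The plan is to reduce the statement entirely to Lemma~\ref{lem:spec_equiv_Schur}, which has already been established for the full (uncondensed) problem. The key observation, which the surrounding text supplies just before the corollary, is the purely algebraic identity $\bar{S} = -BA^{-1}B^T$, where $\bar{S} := -\bar{B}\bar{A}^{-1}\bar{B}^T + \bar{C}$ is the Schur complement of the statically condensed block system in \cref{eq:system_S_comp}, and $A$, $B$ are the blocks of the full system in \cref{eq:blocks}. Equivalently, the quantity appearing in the numerator of \cref{eq:spec_equiv_mathcalM_sc_Schur} satisfies
\begin{equation}
  \label{eq:schur_identity_plan}
  \bar{B}\bar{A}^{-1}\bar{B}^T - \bar{C} = BA^{-1}B^T.
\end{equation}
Once \cref{eq:schur_identity_plan} is in hand, the numerators $Q^T(\bar{B}\bar{A}^{-1}\bar{B}^T - \bar{C})Q$ and $Q^T BA^{-1}B^T Q$ coincide for every admissible $Q$, the denominators are identical, and the two-sided bound \cref{eq:spec_equiv_mathcalM_sc_Schur} is literally \cref{eq:spec_equiv_mathcalM_Schur}. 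So the entire content of the corollary is the identity, and the proof is a one-line invocation of the lemma thereafter.

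First I would record the block decompositions. Reorganizing the full system \cref{eq:block-matrix} by separating the cell velocity $u$ from the remaining unknowns $\mathsf{U} = [\bar{u}^T\ p^T\ \bar{p}^T]^T$ gives the $2\times 2$ block form \cref{eq:reorg-discsys} with blocks $A_{uu}$, $\mathsf{B}$, and $\mathsf{C}$ as in \cref{eq:def_sfA_sfB}. Eliminating $u$ via $u = A_{uu}^{-1}(L_u - \mathsf{B}^T\mathsf{U})$ yields the condensed operator $\mathsf{C} - \mathsf{B}A_{uu}^{-1}\mathsf{B}^T$ acting on $\mathsf{U}$; matching the $(\bar{u})$ and $(p,\bar{p})$ block structure against \cref{eq:system_S_comp} identifies $\bar{A}$, $\bar{B}$, $\bar{C}$ precisely as given in the text. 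The proof of \cref{eq:schur_identity_plan} is then the direct computation the paper already flags as ``easy''. I would carry it out by forming $\bar{S} = \bar{C} - \bar{B}\bar{A}^{-1}\bar{B}^T$ and recognizing it as the Schur complement of the original system taken with respect to the \emph{whole} velocity block $[u^T\ \bar{u}^T]^T$ at once, rather than in the two stages (first $u$, then $\bar{u}$). Schur complements of a symmetric block matrix are invariant under the order in which the eliminated variables are removed, so eliminating $u$ and then forming the Schur complement of $\bar{A}$ against $\bar{B}$ must reproduce the Schur complement $-BA^{-1}B^T$ obtained by eliminating the combined block $[u^T\ \bar{u}^T]^T$ in a single step. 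This invariance is exactly the statement \cref{eq:schur_identity_plan}.

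With \cref{eq:schur_identity_plan} established, I would close the proof as follows. For every $Q = [q^T\ \bar{q}^T]^T$ with $q \in N^{n_p}$ and $\bar{q} \in \mathbb{R}^{\bar{n}_q}$,
\begin{equation}
  \frac{Q^T\del{\bar{B}\bar{A}^{-1}\bar{B}^T - \bar{C}}Q}{Q^T\mathcal{M}Q}
  = \frac{Q^T BA^{-1}B^T Q}{Q^T\mathcal{M}Q},
\end{equation}
and the bounds $\beta_p/\sqrt{c_a^b}$ and $c_b^b/\sqrt{c_a^s}$ follow directly from \cref{eq:spec_equiv_mathcalM_Schur} in \cref{lem:spec_equiv_Schur}.

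The only real work, and hence the main obstacle, is verifying the Schur-complement identity \cref{eq:schur_identity_plan}. The cleanest route avoids grinding through the block formulas in \cref{eq:system_S_matrices}: instead I would appeal to the general fact that for a symmetric positive-(semi)definite saddle structure the pressure Schur complement is independent of whether the velocity degrees-of-freedom are eliminated all at once or in the order ``cell velocity first, then facet velocity.'' The subtlety to check is that the block $\mathsf{C}$ in \cref{eq:def_sfA_sfB} carries the facet-velocity stiffness $A_{\bar{u}\bar{u}}$ in its leading block while vanishing on the pressure rows and columns, so that the second-stage elimination of $\bar{u}$ correctly merges $A_{\bar{u}\bar{u}}$ and the $-A_{\bar{u}u}A_{uu}^{-1}A_{\bar{u}u}^T$ contribution into $\bar{A}$ and routes the pressure couplings through $\bar{B}$; confirming that the algebra assembles exactly into $-BA^{-1}B^T$ is the one computation that must be done, but it is mechanical once the block bookkeeping is set up.
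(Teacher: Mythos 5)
Your proposal is correct and takes essentially the same route as the paper: the paper proves the corollary by observing that $\bar{S} = -\bar{B}\bar{A}^{-1}\bar{B}^T + \bar{C} = -BA^{-1}B^T$ (asserted there as an easy direct computation) and then invoking \cref{lem:spec_equiv_Schur} with identical numerator and denominator, exactly as you do. Your justification of the identity via the quotient property of Schur complements (staged elimination of $u$ then $\bar{u}$ versus eliminating the combined velocity block at once) is merely a cleaner packaging of the paper's ``direct computation,'' and it is legitimate here since $A_{uu}$ and $\bar{A}$ are both invertible, $A$ being symmetric positive definite.
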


This corollary can now be used to develop a preconditioner for the
statically condensed linear system in \cref{eq:system_S_comp}.
\begin{theorem}[An optimal preconditioner for the statically condensed
  discrete Stokes problem based on mass matrices]
  \label{thm:global_sc_precon}
  Let $\bar{A}$ and $\bar{B}$ be the matrices given in
  \cref{eq:system_S_comp} and let $\mathcal{M}$ be defined as in
  \cref{eq:def_mathcal_M}. Let $\bar{R}$ be an operator that is
  spectrally equivalent to $\bar{A}$. Consider the preconditioned system
  \begin{equation}
    \label{eq:precon-block-matrix-sc}
    \bar{\mathbb{P}}_{\mathcal{M}}^{-1}\bar{\mathbb{A}}\bar{\mathbb{U}} = \bar{\mathbb{P}}_{\mathcal{M}}^{-1}\bar{\mathbb{F}}
    \qquad \leftrightarrow \qquad
    \begin{bmatrix}
      \bar{R} & 0 \\ 0 & \mathcal{M}
    \end{bmatrix}^{-1}
    \begin{bmatrix}
      \bar{A} & \bar{B}^T \\
      \bar{B} & \bar{C}
    \end{bmatrix}
    \begin{bmatrix}
      \bar{u} \\
      P
    \end{bmatrix}
    =
    \begin{bmatrix}
      \bar{R} & 0 \\ 0 & \mathcal{M}
    \end{bmatrix}^{-1}
    \begin{bmatrix}
      \bar{L} \\ \bar{G}
    \end{bmatrix}.
  \end{equation}
  There exist positive constants $C_1, C_2, C_3, C_4$, independent of
  $h$, such that the negative and positive eigenvalues of
  $\bar{\mathbb{P}}_{\mathcal{M}}^{-1} \bar{\mathbb{A}}$ satisfy
  $\lambda \in [-C_1, -C_2]$ and $\lambda \in [C_3, C_4]$, respectively.
\end{theorem}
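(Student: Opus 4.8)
The plan is to mirror the argument used for the full problem in \cref{thm:global_precon}, reducing the claim to a direct application of \citep[Theorem~5.2]{Pestana:2015}. That result gives $h$-independent eigenvalue enclosures for a symmetric saddle point matrix preconditioned by a block-diagonal operator, provided the $(1,1)$ block is symmetric positive definite, the off-diagonal coupling is full rank, and the $(2,2)$-block preconditioner is spectrally equivalent to the (negative) Schur complement. The only structural novelty relative to \cref{thm:global_precon} is that the statically condensed system in \cref{eq:precon-block-matrix-sc} carries a nonzero $(2,2)$ block $\bar{C}$, and I would check that the three hypotheses above still hold in this setting.

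First I would establish that $\bar{A}$ is symmetric positive definite. The matrix $\bar{A} = A_{\bar{u}\bar{u}} - A_{\bar{u}u}A_{uu}^{-1}A_{\bar{u}u}^T$ is precisely the Galerkin matrix of the bilinear form $\bar{a}_h(\cdot,\cdot)$ identified in \cref{eq:a_in_bits}, obtained by eliminating the cell velocity through the local solver. Symmetry is inherited from $a_h(\cdot,\cdot)$, and positive definiteness follows from \cref{lem:specequiv_bara}, since $\bar{a}_h(\bar{w}_h,\bar{w}_h)\ge C_1\tnorm{\bar{w}_h}_h^2 > 0$ for $\bar{w}_h\neq 0$ with $C_1$ independent of $h$. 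Taking $\bar{R}$ spectrally equivalent to $\bar{A}$ (as assumed) then supplies a practical, $h$-uniform surrogate for the $(1,1)$ block.

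Next I would treat the Schur complement and the rank of $\bar{B}$. By construction (see the text following \cref{eq:system_S_matrices}) the negative Schur complement of \cref{eq:precon-block-matrix-sc} is $-\bar{S} = \bar{B}\bar{A}^{-1}\bar{B}^T - \bar{C} = BA^{-1}B^T$, and \cref{cor:spec_equiv_sc_Schur} shows it is spectrally equivalent to $\mathcal{M}$ with the $h$-independent constants $\beta_p/\sqrt{c_a^b}$ and $c_b^b/\sqrt{c_a^s}$. Since $\mathcal{M}$ is symmetric positive definite, so is $-\bar{S}$; in particular $\bar{B}$ has full row rank, and because $\bar{C}$ is negative semidefinite (by \cref{eq:system_S_matrices}, each diagonal block has the form $-B A_{uu}^{-1}B^T$ with $A_{uu}$ SPD) the saddle point matrix $\bar{\mathbb{A}}$ is symmetric and nonsingular.

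With these ingredients the hypotheses of \citep[Theorem~5.2]{Pestana:2015} are verified, and applying that theorem with $\bar{R}\sim\bar{A}$ and $\mathcal{M}\sim-\bar{S}$ yields negative and positive eigenvalue intervals $[-C_1,-C_2]$ and $[C_3,C_4]$ whose endpoints depend only on the spectral-equivalence constants and are therefore independent of $h$. The step I expect to require the most care is confirming that the cited theorem genuinely accommodates the nonzero (negative semidefinite) $(2,2)$ block $\bar{C}$ rather than only the pure saddle point form $\bar{C}=0$ used in \cref{thm:global_precon}: one must verify that in the Pestana--Wathen eigenvalue analysis the contribution of $\bar{C}$ is already folded into the Schur complement $-\bar{S}$, so that spectral equivalence of $\mathcal{M}$ to $-\bar{S}$ (and not to $\bar{B}\bar{A}^{-1}\bar{B}^T$ alone) is exactly what is needed for the bounds to remain $h$-uniform.
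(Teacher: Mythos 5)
Your proposal follows essentially the same route as the paper's proof: show $\bar{A}$ is symmetric positive definite via \cref{lem:specequiv_bara}, use the identity $\bar{B}\bar{A}^{-1}\bar{B}^T - \bar{C} = BA^{-1}B^T$ together with \cref{cor:spec_equiv_sc_Schur} to get spectral equivalence of $\mathcal{M}$ with the (invertible) negative Schur complement, and conclude by direct application of \citep[Theorem~5.2]{Pestana:2015}; your closing remark about $\bar{C}$ being absorbed into the Schur complement is exactly how the paper uses that theorem. One small correction: your parenthetical inference that positive definiteness of $\bar{B}\bar{A}^{-1}\bar{B}^T - \bar{C}$ implies $\bar{B}$ has full row rank is a non sequitur (the positive semidefinite term $-\bar{C}$ can be definite on the null space of $\bar{B}^T$), but this claim is never needed — what the argument requires is definiteness of the condensed Schur complement itself, which your spectral-equivalence step already supplies, and which the paper instead obtains from $A$ being SPD and the full rank of the \emph{original} $B$ via \cref{thm:stab_bh}.
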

\begin{proof}
  By \cref{cor:spec_equiv_sc_Schur}, $\mathcal{M}$ is spectrally
  equivalent to the Schur complement $S = -\bar{B} \bar{A}^{-1}
  \bar{B}^T + \bar{C}$ of $\bar{\mathbb{A}}$. Furthermore, the Schur
  complement is invertible. To see this, note that $S = -\bar{B}
  \bar{A}^{-1} \bar{B}^T + \bar{C} = -B A^{-1} B^T$, where $A$ and $B$
  are the matrices given in \cref{eq:blocks}. The operator $A$ is
  symmetric positive definite, and $B$ is full rank by
  \cref{thm:stab_bh}, hence $S$ is invertible. Since $\bar{A}$ is
  symmetric positive definite by \cref{lem:specequiv_bara}, the result
  then follows by direct application
  of~\citep[Theorem~5.2]{Pestana:2015}. \qed
\end{proof}

The preconditioner in \cref{thm:global_sc_precon} is constructed based
on the fact that $\mathcal{M}$ is spectrally equivalent to the Schur
complement of the statically condensed linear system, as described by
\cref{cor:spec_equiv_sc_Schur}. In fact, $\mathcal{M}$ in
\cref{eq:precon-block-matrix-sc} can be replaced by any spectrally
equivalent operator $\mathcal{C}$, since $\mathcal{C}$ would then also
be spectrally equivalent to the Schur complement of the statically
condensed linear system. A particularly interesting choice for
$\mathcal{C}$ is discussed in the following.

\begin{theorem}[An optimal preconditioner for the statically condensed
  discrete Stokes problem based on element matrices]
  \label{thm:global_sc_precon_alt}
  Let $\bar{A}$, $\bar{B}$ and $\bar{C}$ be the matrices given in
  \cref{eq:system_S_comp} and let $\mathcal{C}$ be the negative
  block-diagonal of~$\bar{C}$:
  \begin{equation}
    \label{eq:def_of_C}
    \mathcal{C} =
    \begin{bmatrix}
      B_{pu}A_{uu}^{-1}B_{pu}^T & 0 \\
      0 & B_{\bar{p}u}A_{uu}^{-1}B_{\bar{p}u}^T
    \end{bmatrix}.
  \end{equation}
  Let $\bar{R}$ be an operator that is spectrally equivalent
  to~$\bar{A}$. Consider the preconditioned system
  \begin{equation}
    \label{eq:precon-block-matrix-sc-alt}
    \bar{\mathbb{P}}_{\mathcal{C}}^{-1}\bar{\mathbb{A}}\bar{\mathbb{U}} = \bar{\mathbb{P}}_{\mathcal{C}}^{-1}\bar{\mathbb{F}}
    \qquad \leftrightarrow \qquad
    \begin{bmatrix}
      \bar{R} & 0 \\ 0 & \mathcal{C}
    \end{bmatrix}^{-1}
    \begin{bmatrix}
      \bar{A} & \bar{B}^T \\
      \bar{B} & \bar{C}
    \end{bmatrix}
    \begin{bmatrix}
      \bar{u} \\
      P
    \end{bmatrix}
    =
    \begin{bmatrix}
      \bar{R} & 0 \\ 0 & \mathcal{C}
    \end{bmatrix}^{-1}
    \begin{bmatrix}
      \bar{L} \\ \bar{G}
    \end{bmatrix}.
  \end{equation}
  There exist positive constants $C_1, C_2, C_3, C_4$, independent of
  $h$, such that the negative and positive eigenvalues of
  $\bar{\mathbb{P}}_{\mathcal{C}}^{-1} \bar{\mathbb{A}}$ satisfy
  $\lambda \in [-C_1, -C_2]$ and $\lambda \in [C_3, C_4]$, respectively.
\end{theorem}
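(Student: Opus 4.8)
The plan is to deduce this theorem from \cref{thm:global_sc_precon} by establishing that the block-diagonal matrix $\mathcal{C}$ in \cref{eq:def_of_C} is spectrally equivalent to the mass matrix $\mathcal{M}$ of \cref{eq:def_mathcal_M}, with constants independent of $h$. Granting this, \cref{cor:spec_equiv_sc_Schur} together with transitivity of spectral equivalence gives that $\mathcal{C}$ is spectrally equivalent to the Schur complement $S = -\bar{B}\bar{A}^{-1}\bar{B}^{T} + \bar{C}$. The remaining ingredients are then exactly those used in the proof of \cref{thm:global_sc_precon}: $\bar{A}$ is symmetric positive definite by \cref{lem:specequiv_bara}, $S = -BA^{-1}B^{T}$ is invertible since $A$ is symmetric positive definite and $B$ is full rank by \cref{thm:stab_bh}, and $\mathcal{C}$ is symmetric positive definite. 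The eigenvalue bounds then follow by a direct application of \citep[Theorem~5.2]{Pestana:2015}.

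The whole problem thus reduces to proving
\begin{equation}
  \label{eq:plan_CM}
  c_1\, Q^{T}\mathcal{M}Q \le Q^{T}\mathcal{C}Q \le c_2\, Q^{T}\mathcal{M}Q,
  \qquad c_1, c_2 > 0 \ \text{independent of } h.
\end{equation}
Both $\mathcal{C}$ and $\mathcal{M} = {\rm bdiag}(M, \bar{M})$ are block diagonal with respect to the splitting of $Q$ into its cell-pressure part $q$ and facet-pressure part $\bar{q}$, so \cref{eq:plan_CM} splits into the two independent equivalences $B_{pu}A_{uu}^{-1}B_{pu}^{T} \simeq M$ and $B_{\bar{p}u}A_{uu}^{-1}B_{\bar{p}u}^{T} \simeq \bar{M}$. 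I would treat each using the identity $q^{T}B_{pu}A_{uu}^{-1}B_{pu}^{T}q = \sup_{v \ne 0}\del{q^{T}B_{pu}v}^{2}/\del{v^{T}A_{uu}v}$, and its analogue for $B_{\bar{p}u}$. The key structural observation is that $A_{uu}$ is block diagonal with one block per cell and $v^{T}A_{uu}v = \sum_{K}a_K(v_h, v_h)$, with $a_K$ the local form of \cref{def:localsolver_alt} (i.e.\ the cell restriction of $a_h((\cdot,0),(\cdot,0))$), while $q^{T}B_{pu}v$ and $\bar{q}^{T}B_{\bar{p}u}v$ couple $v$ to the pressures cell-by-cell through $\del{q_h, \nabla\cdot v_h}_K$ and $\langle v_h\cdot n, \bar{q}_h\rangle_{\partial K}$; hence each supremum decouples into a sum of independent single-cell suprema. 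Since the cell-wise coercivity and boundedness underlying \cref{eq:norm_equiv_ah} give $a_K(v_h, v_h) \simeq \norm{\nabla v_h}_K^{2} + \alpha h_K^{-1}\norm{v_h}_{\partial K}^{2}$, both equivalences rest on the cell-local estimates
\begin{equation}
  \label{eq:plan_cellinfsup}
  \sup_{v_h \in V(K)} \frac{\del{q_h, \nabla\cdot v_h}_K^{2}}{a_K(v_h, v_h)} \simeq \norm{q_h}_K^{2}
  \quad \text{and} \quad
  \sup_{v_h \in V(K)} \frac{\langle v_h\cdot n, \bar{q}_h\rangle_{\partial K}^{2}}{a_K(v_h, v_h)} \simeq h_K\norm{\bar{q}_h}_{\partial K}^{2},
\end{equation}
to be proved for all $q_h \in P_{k-1}(K)$ and $\bar{q}_h \in P_k(\partial K)$ with $h$-independent constants; summing over $K \in \mathcal{T}$ then yields \cref{eq:plan_CM}.

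For the first estimate in \cref{eq:plan_cellinfsup}, the upper bound follows from Cauchy--Schwarz, $\del{q_h, \nabla\cdot v_h}_K \le \sqrt{d}\,\norm{q_h}_K\norm{\nabla v_h}_K$, together with $\norm{\nabla v_h}_K^{2} \le c\, a_K(v_h, v_h)$; the lower bound follows from surjectivity of $\nabla\cdot : V(K) \to P_{k-1}(K)$, which furnishes a reference-element inf-sup constant rendered $h$-independent by a standard scaling argument to a reference cell (as underlies \cref{thm:stab_b1}). For the second estimate, the upper bound follows from $\langle v_h\cdot n, \bar{q}_h\rangle_{\partial K} \le \norm{v_h}_{\partial K}\norm{\bar{q}_h}_{\partial K}$ and $\norm{v_h}_{\partial K}^{2} \le c\, h_K\, a_K(v_h, v_h)$, the latter being immediate from $a_K(v_h, v_h) \ge c\,\alpha h_K^{-1}\norm{v_h}_{\partial K}^{2}$. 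For the lower bound I would insert the test function $v_h = L^{\rm BDM}\bar{q}_h$ from the BDM lifting: since $\del{L^{\rm BDM}\bar{q}_h}\cdot n = \bar{q}_h$ on $\partial K$ by \cref{eq:localBDM} and $a_K(L^{\rm BDM}\bar{q}_h, L^{\rm BDM}\bar{q}_h) \le c\, h_K^{-1}\norm{\bar{q}_h}_{\partial K}^{2}$ by \cref{eq:bound_nabla_BDM,eq:bound_boundary_BDM}, the ratio is bounded below by $c\, h_K\norm{\bar{q}_h}_{\partial K}^{2}$. Notably, because the decoupling lets the test function be chosen independently on each cell, this sidesteps the factor $c_{*} = h_{\min}/h_{\max}$ incurred in the global estimate of \cref{thm:stab_b2}.

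The step I expect to be the main obstacle is the second estimate in \cref{eq:plan_cellinfsup}: one must verify that the BDM lifting delivers a genuinely $h_K$-uniform cell-local inf-sup for the facet-pressure coupling, and that the block-diagonal structure of $A_{uu}$ legitimately decouples the Schur-complement suprema cell-by-cell, so that no cross-cell interaction spoils the constants. The remaining bounds in \cref{eq:plan_cellinfsup} are routine scaling and trace--inverse estimates.
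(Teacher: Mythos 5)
Your proposal is correct, and its skeleton matches the paper's: reduce everything to the spectral equivalence of $\mathcal{C}$ with $\mathcal{M}$, then conclude exactly as in \cref{thm:global_sc_precon} ($\bar{A}$ symmetric positive definite by \cref{lem:specequiv_bara}, $S = -BA^{-1}B^T$ invertible via \cref{thm:stab_bh}, and \citep[Theorem~5.2]{Pestana:2015}). Where you differ is in the mechanism for the two block equivalences $B_{pu}A_{uu}^{-1}B_{pu}^T \simeq M$ and $B_{\bar{p}u}A_{uu}^{-1}B_{\bar{p}u}^T \simeq \bar{M}$. The paper argues globally: it cites the equivalence of $a_h^{uu}(\cdot,\cdot)$ with $\tnorm{\cdot}_{DG}^2$ in \cref{eq:norm_equiv_ah_alt} and the assembled inf-sup conditions \cref{eq:infsupb1_dg,eq:infsupb2_dg}, and then repeats the min--max argument of \cref{lem:spec_equiv_Schur}. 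You instead exploit the cell-block-diagonality of $A_{uu}$ to decouple the Schur-complement supremum cell-by-cell and prove two one-cell equivalences; the decoupling step you flag as a risk is in fact exact, since $A_{uu} = {\rm bdiag}(A_K)$ and the functionals $v \mapsto q^T B_{pu}v$ and $v \mapsto \bar{q}^T B_{\bar{p}u}v$ split as sums over cells of functionals of $v|_K$ alone, so $\sup_{v}\del{\sum_K \ell_K(v_K)}^2/\sum_K v_K^T A_K v_K = \sum_K \ell_K^T A_K^{-1}\ell_K$ by Cauchy--Schwarz over blocks, with equality at $v_K = A_K^{-1}\ell_K$. The local ingredients you then use are precisely those from which the paper's appendix assembles its global conditions---the per-cell divergence inf-sup \cref{eq:infsupb1_dg_K} (note this, from \citep[Proposition~10]{Hansbo:2002}, is the result you need, rather than the BDM argument underlying \cref{thm:stab_b1}; it must hold in the $\tnorm{\cdot}_{DG(K)}$ norm including the boundary term, which the scaling argument supplies) and the BDM lifting bounds \cref{eq:localBDM,eq:bound_nabla_BDM,eq:bound_boundary_BDM}. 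Your localization buys something concrete: the paper's facet inf-sup \cref{eq:infsupb2_dg} is read off from \cref{eq:infsuponK}, whose derivation incurs the factor $c_* = h_{\min}/h_{\max}$ (and hence is uniform only under bounded mesh grading), whereas your per-cell choice $v_h|_K = L^{\rm BDM}\bar{q}_h$ gives $\langle v_h \cdot n, \bar{q}_h\rangle_{\partial K} = \norm{\bar{q}_h}_{\partial K}^2$ and $a_K(v_h, v_h) \le c\, h_K^{-1}\norm{\bar{q}_h}_{\partial K}^2$ cell by cell, so the equivalence with $\bar{M}$ holds with purely local, shape-regularity constants. So: same key facts and same endgame as the paper, but a sharper, self-contained implementation of the $\mathcal{C} \simeq \mathcal{M}$ step that the paper only sketches.
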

\begin{proof}
  It suffices to prove that $\mathcal{C}$ and $\mathcal{M}$ are
  spectrally equivalent. By minor modification of the proof of
  \cref{lem:spec_equiv_Schur}, by using the spectral equivalence of
  $a_h^{uu}(v_h, v_h)$ with $\tnorm{v_h}^2_{DG}$ (see
  \cref{eq:norm_equiv_ah_alt}) and the inf-sup conditions (see
  \cref{eq:infsupb1_dg,eq:infsupb2_dg}), it can by shown that $B_{pu}
  A_{uu}^{-1} B_{pu}^T$ and $M$ are spectrally equivalent and that
  $B_{\bar{p}u} A_{uu}^{-1} B_{\bar{p}u}^T$ and $\bar{M}$ are
  spectrally equivalent. It follows that $\mathcal{C}$ and
  $\mathcal{M}$ are spectrally equivalent. \qed
\end{proof}

\subsection{Characterization of $\bar{A}$}
\label{ss:reduced_problem}

\Cref{thm:global_sc_precon,thm:global_sc_precon_alt} define optimal
preconditioners for the statically condensed discrete Stokes problem
provided we have an operator $\bar{R}$ that is spectrally equivalent
to~$\bar{A}$. To help in finding a suitable $\bar{R}$, we first
consider the properties of~$\bar{A}$.  The operator $\bar{A}$ is
obtained from the discretization of $\bar{a}_h(\bar{u}_h, \bar{v}_h)$
in \cref{eq:a_in_bits}. By \cref{lem:specequiv_bara} we know that
$\bar{A}$ is spectrally equivalent to the norm $\tnorm{\cdot}_h^2$. As
discussed, in for example~\cite{Lee:2017}, $\tnorm{\cdot}_h$ is a
$H^1$-like norm and the near-null space of $\bar{A}$ is spanned by
constant functions. This is a condition to successfully apply
multigrid-type solvers to~$\bar{A}$.  This motivates the use of
multigrid for the operator $\bar{R}$ that appears in
\cref{thm:global_sc_precon,thm:global_sc_precon_alt}.

\subsection{Block symmetric Gauss--Seidel preconditioners}
\label{s:block_sgs}

\Cref{thm:global_sc_precon,thm:global_sc_precon_alt} introduce two
block-diagonal preconditioners. In practice, we see that the rates of
convergence of preconditioned iterative methods using the block
Jacobi-type preconditioners is typically improved upon by adding
off-diagonal blocks to the preconditioner. We therefore also consider
block symmetric Gauss--Seidel type preconditioners.

Let $\bar{\mathbb{A}}$ be the system matrix defined in
\cref{eq:system_S_comp}, $\mathcal{P}_D$ the block-diagonal of
$\bar{\mathbb{A}}$ and $\mathcal{P}_L$ a strictly lower triangular
block matrix such that
\begin{equation}
  \bar{\mathbb{A}} = \mathcal{P}_L + \mathcal{P}_D + \mathcal{P}_L^T.
\end{equation}
Furthermore, let $\mathcal{P}_M = {\rm bdiag}(-A_{\bar{u}u} A_{uu}^{-1}
A_{\bar{u}u}^T + A_{\bar{u} \bar{u}}, -M, -\bar{M})$. The block
symmetric Gauss--Seidel type preconditioners we consider in
\cref{s:numerical_examples} are:
\begin{equation}
  \label{eq:bSGS_C_M}
  \bar{\mathbb{P}}_{\mathcal{C}}^{SGS} = (\mathcal{P}_L + \mathcal{P}_D)\mathcal{P}_D^{-1}(\mathcal{P}_L^T + \mathcal{P}_D),
  \qquad
  \bar{\mathbb{P}}_{\mathcal{M}}^{SGS} = (\mathcal{P}_L + \mathcal{P}_M)\mathcal{P}_M^{-1}(\mathcal{P}_L^T + \mathcal{P}_M).
\end{equation}
In the numerical examples, the inverse of the first block of
$\mathcal{P}_D$ and $\mathcal{P}_M$ will be replaced by~$\bar{R}^{-1}$.

\section{Numerical example}
\label{s:numerical_examples}

We now verify numerically the performance of the preconditioners
introduced in \cref{thm:global_sc_precon,thm:global_sc_precon_alt},
and the symmetric block Gauss--Seidel preconditioner in
\cref{eq:bSGS_C_M}.  We use a preconditioned MINRES solver, with AMG
(four multigrid V-cycles) for the operator~$\bar{R}^{-1}$. The inverse
of the pressure mass-matrix $\mathcal{M}$, and the spectrally
equivalent operator $\mathcal{C}$, are also approximated by four AMG
V-cycles. In both cases one application, pre and post, of a
Gauss--Seidel smoother is used. The MINRES iterations are terminated
once the relative true residual reaches a tolerance of~$10^{-8}$. We
consider unstructured simplicial meshes and unstructured quadrilateral
and hexahedral meshes. For simplex cells, we use a quadratic
polynomial approximation for $u_h$, $\bar{u}_h$ and $\bar{p}_h$, and a
linear polynomial approximation for~$p_h$. For meshes with
quadrilateral cells, we use a bi-quadratic polynomial approximation
for $u_h$, quadratic approximation of $\bar{u}_h$ and $\bar{p}_h$, and
a bilinear polynomial approximation for $p_h$. For meshes with
hexahedral cells, we use a tri-quadratic polynomial approximation for
$u_h$, bi-quadratic approximation of $\bar{u}_h$ and $\bar{p}_h$, and
a tri-linear polynomial approximation for~$p_h$. The stabilization
parameter is taken as $\alpha = 24$ in 2D and $\alpha = 40$ in~3D. The
formulation has been implemented in MFEM~\cite{mfem-library} with
solver support from PETSc~\cite{petsc-user-ref,petsc-web-page}. We use
classical algebraic multigrid via the BoomerAMG
library~\cite{Henson:2002}.

We consider lid-driven cavity flow in a square, $\Omega = [-1, 1]^2$,
and a cube, $\Omega = [0, 1]^3$. Dirichlet boundary conditions are
imposed on $\partial \Omega$. In two dimensions, $u = (1 - x_1^4, 0)$
on the boundary $x_2 = 1$ and the zero velocity vector on remaining
boundaries. In three dimensions we impose $u = (1 - \tau_1^4, (1 -
\tau_2^4)/10, 0)$, with $\tau_i = 2 x_i - 1$, on the boundary $x_3 =
1$ and the zero velocity vector on remaining boundaries.

\Cref{tab:liddriveniterations_tri} presents the iteration counts for
MINRES to converge for different levels of refinement on simplicial
meshes, and \cref{tab:liddriveniterations_quad} presents the iteration
counts for the quadrilateral and hexahedral mesh cases.
\begin{table}
  \begin{center}
    \begin{tabular}{c|cc||cc}
      \multicolumn{5}{c}{Two dimensions} \\
      \multicolumn{5}{c}{} \\
      \hline
      DOFs & $\bar{\mathbb{P}}_{\mathcal{M}}$ & $\bar{\mathbb{P}}_{\mathcal{M}}^{SGS}$
      & $\bar{\mathbb{P}}_{\mathcal{C}}$ & $\bar{\mathbb{P}}_{\mathcal{C}}^{SGS}$ \\
      \hline
      12012   & 136 & 73 & 94 & 89 \\
      47256   & 131 & 72 & 96 & 98 \\
      187440  & 134 & 69 & 96 & 102 \\
      746592  & 128 & 63 & 97 & 96 \\
      \hline
      \multicolumn{5}{c}{} \\
      \multicolumn{5}{c}{Three dimensions} \\
      \multicolumn{5}{c}{} \\
      \hline
      DOFs & $\bar{\mathbb{P}}_{\mathcal{M}}$ & $\bar{\mathbb{P}}_{\mathcal{M}}^{SGS}$
      & $\bar{\mathbb{P}}_{\mathcal{C}}$ & $\bar{\mathbb{P}}_{\mathcal{C}}^{SGS}$ \\
      \hline
      30128   & 230 & 150 & 122 & 139 \\
      229504  & 259 & 159 & 145 & 151 \\
      1789952 & 258 & 138 & 166 & 153 \\
      \hline
    \end{tabular}
  \end{center}
  \caption{Iteration counts for preconditioned MINRES for the relative
    true residual to reach a tolerance of $10^{-8}$ for the lid-driven
    cavity problem in two and three dimensions using unstructured
    simplicial meshes.}
  \label{tab:liddriveniterations_tri}
\end{table}
\begin{table}
  \begin{center}
    \begin{tabular}{c|cc||cc}
      \multicolumn{5}{c}{Two dimensions} \\
      \multicolumn{5}{c}{} \\
      \hline
      DOFs & $\bar{\mathbb{P}}_{\mathcal{M}}$ & $\bar{\mathbb{P}}_{\mathcal{M}}^{SGS}$
      & $\bar{\mathbb{P}}_{\mathcal{C}}$ & $\bar{\mathbb{P}}_{\mathcal{C}}^{SGS}$ \\
      \hline
      10956   & 109 & 56 & 84 & 79 \\
      43032   & 104 & 52 & 80 & 73 \\
      170544  & 104 & 47 & 80 & 68 \\
      679008  &  98 & 42 & 81 & 75 \\
      \hline
      \multicolumn{5}{c}{} \\
      \multicolumn{5}{c}{Three dimensions} \\
      \multicolumn{5}{c}{} \\
      \hline
      DOFs & $\bar{\mathbb{P}}_{\mathcal{M}}$ & $\bar{\mathbb{P}}_{\mathcal{M}}^{SGS}$
      & $\bar{\mathbb{P}}_{\mathcal{C}}$ & $\bar{\mathbb{P}}_{\mathcal{C}}^{SGS}$ \\
      \hline
      9152    & 130 & 79 & 88 & 82 \\
      66304   & 123 & 70 & 87 & 93 \\
      502784  & 114 & 57 & 85 & 78 \\
      \hline
    \end{tabular}
  \end{center}
  \caption{Iteration counts for preconditioned MINRES for the relative
    true residual to reach a tolerance of $10^{-8}$ for the lid-driven
    cavity problem in two and three dimensions using unstructured
    quadrilateral and structured hexahedral meshes.}
  \label{tab:liddriveniterations_quad}
\end{table}
It is clear that the iteration count does not grow with problem size in
all cases. Note that the diagonal preconditioners based on
$\mathcal{C}$, i.e., using only the blocks available from the system
matrix $\bar{\mathbb{A}}$ in \cref{eq:system_S_comp}, outperform the
preconditioners based on $\mathcal{M}$ consisting of the element
pressure mass-matrix $M$ and the scaled facet pressure mass-matrix
$\bar{M}$. In the case of the symmetric block Gauss-Seidel
preconditioners, there is no gain in using $\mathcal{C}$
over~$\mathcal{M}$.

We have observed that switching from left-preconditioned MINRES to
right-preconditioned GMRES can improve the iteration count
substantially. For example, in the case of the
$\bar{\mathbb{P}}_{\mathcal{C}}^{SGS}$ preconditioner for a
three-dimensional simplicial grid with 1789952 DOFs, the iteration
count for right-preconditioned GMRES is only 37 (compared with 153
iterations for left-preconditioned MINRES).

\section{Conclusions}
\label{sec:conclusions}

We have developed, analyzed and numerically tested two new
block-diagonal preconditioners for the statically condensed linear
system for a hybridized discontinuous Galerkin method for the Stokes
equations. In particular, we proved and showed numerically that the
preconditioners are optimal in that preconditioned systems can be
solved to a specified tolerance in an iteration count that is
independent of the problem size. This makes the preconditioner
suitable for very large systems, and especially for problems in which
pointwise satisfaction of the continuity equation is important since
the considered method has this valuable property. Discretizations of
the Stokes problem using a hybridized discontinuous Galerkin method
permit static condensation; cell degrees-of-freedom can be eliminated
locally, resulting in significantly reduced number of globally coupled
degrees-of-freedom.  This does however complicate the analysis, and
our analysis addresses the form and structure of a condensed problem.

\appendix
\section{Auxiliary results}
\label{s:aux_results}

We provide here some auxiliary results used in analyzing the
preconditioners.

Defining $a_h^{uu}(u_h, v_h) := a_h((u_h, 0), (v_h, 0))$, since
$\tnorm{v_h}_{DG} = \tnorm{(v_h, 0)}_v$, a consequence of
\cref{eq:norm_equiv_ah} is:
\begin{equation}
  \label{eq:norm_equiv_ah_alt}
  c_a^s\tnorm{v_h}_{DG}^2 \le a_h^{uu}(v_h, v_h) \le c_a^b\tnorm{v_h}_{DG}^2.
\end{equation}
Applying \citep[Proposition~10]{Hansbo:2002} to a single cell $K$, the
following inf-sup condition holds:
\begin{equation}
  \label{eq:infsupb1_dg_K}
  \beta_{DG}^K \norm{q_h}_{K} \le \sup_{v_h \in V_h(K)}
  \frac{(q_h, \nabla \cdot v_h)_{K}}{\tnorm{v_h}_{DG(K)}}
  \quad \forall q_{h} \in P_{k-1}(K),
\end{equation}
where $\beta_{DG}^K > 0$ is a constant independent of $h$, $V_h(K) :=
\sbr{P_k(K)}^d$ and $\tnorm{v_h}_{DG(K)}^2 := \norm{\nabla v_h}^2_{K} +
\alpha h_K^{-1} \norm{v_h}^2_{\partial K}$. It follows that
\begin{equation}
  \label{eq:infsupb1_dg}
  \beta_{DG} \norm{q_h}_{\Omega} \le \sup_{v_h \in V_h}
  \frac{(q_h, \nabla\cdot v_h)_{\mathcal{T}}}{\tnorm{v_h}_{DG}},
\end{equation}
where $\beta_{DG} := \min_{K \in \mathcal{T}} \beta_{DG}^K$. Since
$\tnorm{v_h}_{DG} = \tnorm{(v_h, 0)}_v$, it is easy to see from
\cref{eq:infsuponK} that
\begin{equation}
  \label{eq:infsupb2_dg}
  \bar{\beta}_{DG} \norm{\bar{q}_h}_{p} \le \sup_{v_h \in V_h}
  \frac{ \langle v_h \cdot n, \bar{q}_h \rangle_{\partial\mathcal{T}}}{\tnorm{ v_h }_{DG}},
\end{equation}
where $\bar{\beta}_{DG} > 0$ is a constant independent of~$h$.

\bibliographystyle{spbasic}
\bibliography{references}
\end{document}